\theoremstyle{definition}
\newtheorem{definition}{Definition}[section]
\newtheorem{example}[definition]{Example}
\theoremstyle{plain}
\newtheorem{proposition}[definition]{Proposition}
\newtheorem{lemma}[definition]{Lemma}
\newtheorem{theorem}[definition]{Theorem}
\newtheorem{corollary}[definition]{Corollary}
\newtheorem{conjecture}[definition]{Conjecture}
\newtheorem{openproblem}[definition]{Open problem}
\newtheorem{claim}[definition]{Claim}
\theoremstyle{remark}
\newtheorem{remark}[definition]{Remark}
\newcommand{\N}{\mathbb{N}} %natural numbers
\newcommand{\words}{\Sigma^*} %finite words
\newcommand{\eps}{\varepsilon} %emptyword
\newcommand{\pref}{\textstyle{\mathop{\mathrm{pref}}}} %prefix
\newcommand{\suff}{\textstyle{\mathop{\mathrm{suff}}}} %suffix
\begin{document}
\begin{frontmatter}

\title {On cardinalities of $k$-abelian equivalence classes}
\author[Turku]{Juhani Karhum\"aki\fnref{fn1,fn2}}
\ead{karhumak@utu.fi}
\fntext[fn1]{Department of Mathematics and Statistics, University of Turku, Finland}
\fntext[fn2]{TUCS Turku Centre for Computer Science, Turku, Finland}
%\fntext[fn6]{}

\author[Lyon]{Svetlana Puzynina\fnref{fn5,fn4}}
\ead{s.puzynina@gmail.com}
\fntext[fn5]{LIP, ENS de Lyon, CNRS, UCBL, Universit\'e de Lyon}
\fntext[fn4]{Sobolev Institute of Mathematics, Russia}
%\fntext[fn7]{}

\author[Lyon]{Micha\"el Rao\fnref{fn5}}
\ead{michael.rao@ens-lyon.fr}

\author[Turku]{Markus A. Whiteland\corref{cor1}
\fnref{fn1}}
\ead{mawhit@utu.fi}
\cortext[cor1]{Corresponding author}

\address[Turku]{20014 University of Turku, Finland}
\address[Lyon]{LIP, ENS Lyon, 46 All\'ee d'Italie, Lyon 69364, France}

\begin{abstract}Two words $u$ and $v$ are $k$-abelian equivalent
if for each word $x$ of length at most $k$, $x$ occurs equally
many times as a factor in both $u$ and $v$. The notion of $k$-abelian
equivalence is an intermediate notion between the abelian equivalence
and the equality of words. In this paper, we study the equivalence
classes induced by the $k$-abelian equivalence, mainly focusing on
the cardinalities of the classes. In particular, we are interested
in the number of singleton $k$-abelian classes, i.e., classes
containing only one element. We find a connection between the
singleton classes and cycle decompositions of the de Bruijn graph.
We show that the number of classes of words of length $n$
containing one single element is of order
$\mathcal O (n^{N_m(k-1)-1})$, where
$N_m(l)=\tfrac{1}{l}\sum_{d\mid l}\varphi(d)m^{l/d}$ is the number
of necklaces of length $l$ over an $m$-ary alphabet.
% and $\varphi$ is the Euler's totient function.
We conjecture that the upper bound is sharp. We also remark that,
for $k$ even and $m=2$, the lower bound $\Omega (n^{N_m(k-1)-1})$
follows from an old conjecture on the existence of Gray codes
for necklaces of odd length. We verify this conjecture for necklaces
of length up to 15.
\end{abstract}
\begin{keyword}
Combinatorics of words \sep $k$-abelian equivalence \sep de Bruijn graph \sep Necklaces \sep Gray codes
\end{keyword}
\end{frontmatter}

\section{Introduction}
For an integer $k$, two finite words $u,v$ are
called \emph{$k$-abelian equivalent}, denoted by $u\sim_k v$, if
they contain the same number of occurrences of each non-empty word
of length at most $k$. The notion has captured attention recently,
especially with respect to repetitions and complexity functions
of infinite words (\cite{EMMN15,HSa13,KPS12,KSZ13,KSZ14}). In
particular, the work of J. Karhum\"aki, A. Saarela and L. Q. Zamboni
(\cite{KSZ13}) includes several equivalent characterizations of
$k$-abelian equivalence and sets a solid foundation for the
investigation of the topic.

This paper can be seen as a step towards understanding the structure
of $k$-abelian equivalence classes. First, we characterize $k$-abelian
equivalence in terms of rewriting. More precisely, we introduce the
notion of \emph{$k$-switchings}, where one rearranges factors
occurring in the word (see \autoref{sec:characterization} for the
definition.) Using this characterization, we are able to completely
characterize $k$-abelian classes containing only one word. Furthermore,
we show that the number of such classes is of order
$\mathcal O (n^{N_m(k-1)-1})$, where
$N_m(l)=\tfrac{1}{l}\sum_{d\mid l}\varphi(d)m^{l/d}$ is the number
of necklaces (also known as circular words; i.e., equivalence
classes of words under conjugacy) of length $l$ over an
$m$-ary alphabet and $\varphi$ is Euler's totient function.
We also obtain a formula for counting the number of words in a
$k$-abelian equivalence class induced by a given word $w$.

A common theme of the results mentioned above is that they are obtained
by forming a connection between properties of the de Bruijn graph and
$k$-abelian equivalence classes. We invoke classical theorems from graph
theory (e.g., BEST theorem \cite{A-EdB51}) and also particular results
concerning de Bruijn graphs (such as Lempel's conjecture proved by
Mykkeltveit \cite{Myk72}).
We observe a connection between cycle decompositions of the de Bruijn
graph and $k$-abelian singleton classes, and we use this connection to
find an upper bound of the number of $k$-abelian singleton classes of
a given length, which we conjecture to be sharp (up to a constant
multiple).

For the binary alphabet and even $k$, the lower bound follows from a
twenty-year-old open problem on existence of Gray codes for necklaces
stated in \cite{Sav97}, (see \autoref{con:GrayCode}, see also
\cite{DeDr07} for recent results). A Gray code for necklaces is defined
as an ordering of all necklaces such that any two consecutive necklaces
have representatives which differ in only one bit. Concerning the
conjecture, we give new supporting evidence.

The paper is structured as follows. In \autoref{sec:preliminaries}
we define the basic notions and recall basic results on
combinatorics on words. In \autoref{sec:characterization} we show
a new characterization of $k$-abelian equivalence in terms of
switchings. In \autoref{sec:cardinality} we obtain a formula for
counting the number of words in an equivalence class induced by a
given word $w$. In \autoref{sec:singletons} we study, based on our
characterization, the number of singleton classes, i.e., $k$-abelian
classes containing exactly one element. We give an upper bound for the
number of singleton classes, and we conjecture it to be sharp. We then
finish by stating an open problem in \autoref{sec:conclusions}.

\section{Preliminaries}\label{sec:preliminaries}
Given a finite non-empty set $\Sigma$, we denote by $\words$ the set of finite
words over $\Sigma$ including the empty word $\eps$. The set of non-empty words
is denoted by $\Sigma^+$. Given a finite word
$u = a_1a_2\cdots a_n$, $a_i\in \Sigma$, $n\geq 1$, we let $|u|$ denote the
length $n$ of $u$ and, by convention, we set $|\eps| = 0$. We denote by
$\Sigma^n$ the words of length $n$ over $\Sigma$. For any $x\in \Sigma^+$ let
$|u|_x$ denote the number of occurrences of $x$, including overlapping ones, as
a factor of $u$. We shall use the convention $|u|_{\eps} = |u|+1$.

For $u = a_1a_2\cdots a_n$ and integers $i,j$ with $1\leq i\leq j \leq n$ we
will use the notation $u{[i,j]} = a_i\cdots a_j$, for $j>i$ we denote
$u{[i,j)} = a_i\cdots a_{j-1}$. For $i\geq j$, we define $u{[i,j)} = \eps$. We
shall often denote $u[i..] = u[i,|u|]$ for brevity. 

Let $w$ be a non-empty word, and $q\in \mathbb{Q}$, such that $q \cdot |w| \in \N$.
Then $w^q$ is defined as the word of length $q \cdot |w|$ for which $w^q[i]=w[i]$
if $i\le \min\{q\cdot |w|,|w|\}$ and $w^q[i]=w^q[i-|w|]$ otherwise. For $q\geq 2$,
we call the word $w^q$ a \emph{repetition}. A word is \emph{primitive} if there is
no word $v$ and integer $l>1$ such that $w=v^l$. The \emph{period} of a word is the
least integer $l$ such that for every $1\le i \le |w|-l$, $w[i]=w[i+l]$. Note that
if $w$ is primitive, then the period of $w^q$ is $|w|$.

Two words $u$ and $v$ are \emph{conjugates}, if $u = xy$ and $v = yx$
for some $x,y\in\words$. The set of all conjugates of a word $u$ is
called a \emph{necklace}, or a \emph{circular word}, induced
by $u$. The necklace induced by $u$ is denoted by $u^{\circ}$.

For a finite or infinite word $u$ we denote by $F(u)$ the set of
finite factors of $u$ and by $F_n(u)$ the set of factors of $u$ of
length $n$. Similarly, for a non-empty necklace $u^{\circ}$, we
define the set of factors of $u^{\circ}$ as
$F(u^{\circ}) = F(u^{\omega})$ and factors of length $n$ as
$F_n(u^{\circ}) = F_n(u^{\omega})$,
where $u^{\omega}$ is the infinite repetition $uuu\cdots.$

The \emph{de Bruijn graph of order} $n$ over $\Sigma$, denoted by
$dB_{\Sigma}(n)$, is defined as follows. The set of vertices equals
$\Sigma^n$. There is an edge from $u$ to $v$ if and only if there
exist $a,b \in \Sigma$ and a word $x\in\Sigma^{n-1}$ such that
$u = ax$ and $v = xb$. The edge $(ax,xb)$ corresponds to the word
$axb$ of length $n+1$. We shall often omit the subscript $\Sigma$
when the alphabet is clear from context.

Define the function $\Psi_k : \words \to \N^{\Sigma^k}$ as
$\Psi_k(u)[x] = |u|_x$ for $x \in \Sigma^k$. For $k=1$, $\Psi_k(u)$
is also known as the \emph{Parikh vector} of a word $u\in\words$.

\begin{definition}
Two words $u,v\in\words$ are said to be $k$-abelian equivalent,
denoted by $u\sim_k v$, if $\Psi_m(u) = \Psi_m(v)$ for all
$1\leq m \leq k$.
\end{definition}
Note that $u\sim_k v$ implies, by definition, $u\sim_m v$ for all
$1\leq m\leq k$. The relation $\sim_k$ is clearly an equivalence
relation, in fact, even a congruence. We shall denote by $[u]_k$
the $k$-abelian equivalence class induced by $u$.

\begin{definition}
Let $u\in\words$ and $k\geq 1$. If $|[u]_k| = 1$, then $u$ is said
to be a \emph{$k$-abelian singleton}, or in short \emph{singleton},
when $k$ is clear from context.
\end{definition}

\begin{example}
Let $u = ababab$ and $v = aababb$. Then $u$ is a $2$-abelian singleton,
since $[u]_2 = \{u\}$ as there are no other words of length 6 containing
three occurrences of $ab$. On the other hand, $[v]_2 = \{v,aabbab,abaabb,abbaab\}$.
%\xqed
\end{example}

The following characterization is easy to see, see e.g. \cite{KSZ13}:
\begin{lemma}
Let $u$ and $v$ be words of length at least $k-1$. Then $u\sim_k v$
if and only if $\Psi_k(u) = \Psi_k(v)$, $\pref_{k-1}(u) = \pref_{k-1}(v)$
and $\suff_{k-1}(u) = \suff_{k-1}(v)$.
\end{lemma}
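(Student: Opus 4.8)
The engine of the whole argument is a single \emph{extension identity}. Fix $m\ge 1$, a word $u$ with $|u|\ge m-1$, and a word $y$ of length $m-1$. Each occurrence of $y$ in $u$ either is immediately followed by some letter $a\in\Sigma$ — and then extends uniquely to an occurrence of $ya$ — or is the occurrence whose last letter coincides with the last letter of $u$, which happens exactly when $y=\suff_{m-1}(u)$. The plan is first to record this bijection as
\[
|u|_y=\sum_{a\in\Sigma}|u|_{ya}+\mathbf 1\bigl(\suff_{m-1}(u)=y\bigr),
\]
and, reading occurrences from the left instead,
\[
|u|_y=\sum_{a\in\Sigma}|u|_{ay}+\mathbf 1\bigl(\pref_{m-1}(u)=y\bigr),
\]
where $\mathbf 1(\cdot)$ is $1$ if its argument holds and $0$ otherwise. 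I would verify these directly, paying attention to the boundary case $|u|=m-1$, where both sums vanish, $u$ equals its own length-$(m-1)$ prefix and suffix, and the identities still hold. They say exactly that $\Psi_{m-1}(u)$ is recoverable from $\Psi_m(u)$ together with the length-$(m-1)$ prefix and suffix, and, conversely, that $\suff_{m-1}(u)$ is the \emph{unique} word $y$ for which $|u|_y-\sum_a|u|_{ya}=1$ (this difference being $0$ for every other $y$).

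For the forward implication, suppose $u\sim_k v$. Then $\Psi_k(u)=\Psi_k(v)$ by definition, and also $\Psi_{k-1}(u)=\Psi_{k-1}(v)$. Taking $m=k$ in the first identity, the quantity $|u|_y-\sum_a|u|_{ya}$ is computed entirely from $\Psi_{k-1}$ and $\Psi_k$, so it agrees for $u$ and $v$ on every $y$ of length $k-1$; since the unique $y$ making it equal to $1$ is the length-$(k-1)$ suffix, I conclude $\suff_{k-1}(u)=\suff_{k-1}(v)$. The second identity gives $\pref_{k-1}(u)=\pref_{k-1}(v)$ symmetrically.

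For the converse, assume the three stated conditions and prove $\Psi_m(u)=\Psi_m(v)$ by downward induction on $m$, starting from the hypothesis $\Psi_k(u)=\Psi_k(v)$ at $m=k$. For the step I would first observe that every suffix of length $\ell\le k-1$ is a suffix of $\suff_{k-1}(u)$, so $\suff_{k-1}(u)=\suff_{k-1}(v)$ propagates to $\suff_{m-1}(u)=\suff_{m-1}(v)$ for each $m\le k$ (both exist since $|u|,|v|\ge k-1$). Substituting the inductive hypothesis $\Psi_m(u)=\Psi_m(v)$ and the equal suffixes into the first identity gives $|u|_y=|v|_y$ for every $y$ of length $m-1$, i.e.\ $\Psi_{m-1}(u)=\Psi_{m-1}(v)$; this closes the induction down to $m=1$, so $u\sim_k v$.

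None of the steps is deep; the only thing needing real care is boundary bookkeeping, which is where I expect all the (minor) difficulty to sit. I would treat explicitly the degenerate length $|u|=k-1$ (no factor of length $k$, and $u$ equal to both its length-$(k-1)$ prefix and suffix) and the base case $k=1$ (where $\pref_0(u)=\suff_0(u)=\eps$ and the statement collapses to $\Psi_1(u)=\Psi_1(v)$), and I would check that the difference $|u|_y-\sum_a|u|_{ya}$ is genuinely $0$ or $1$ and never larger, so that it really pins down a single candidate suffix. I would also remark that the converse uses only the suffix condition together with $\Psi_k$, so the prefix hypothesis is in fact redundant; it is kept in the statement for symmetry.
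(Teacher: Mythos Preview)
Your argument is correct; the extension identities you use are exactly the standard device for this characterization, and your handling of the boundary cases and the downward induction is clean. Note that the paper itself does not supply a proof of this lemma but simply cites it as known from \cite{KSZ13}, so there is no in-paper argument to compare against; your write-up is in fact more detailed than what the paper offers, and your observation that the prefix hypothesis is redundant in the converse direction is a nice extra.
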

We shall mostly use this equivalent definition for $k$-abelian equivalence,
that is, we generally assume that the words are long enough.

\section{A characterization by rewriting}\label{sec:characterization}
In this section we describe rewriting rules of words, which preserve
equivalence classes. This provides a new characterization of $k$-abelian
equivalence.

Let $k \geq 1$ and let $u = u_1\cdots u_n$. Suppose further that there exist
indices $i,j,l$ and $m$, with $i<j \leq l<m\leq n-k+2$, such that
$u{[i,i+k-1)} = u{[l,l+k-1)} = x\in\Sigma^{k-1}$ and
$u{[j,j+k-1)} = u{[m,m+k-1)} = y\in\Sigma^{k-1}$. We thus have
$$u = u{[1,i)} \cdot u{[i,j)} \cdot u{[j,l)} \cdot u{[l,m)} \cdot u{[m..]},$$
where $u{[i..]}$ and $u{[l..]}$ begin with $x$ and $u{[j..]}$ and $u{[m..]}$
begin with $y$. Note here that we allow $l = j$ (in this case $y = x$). We
define a \emph{$k$-switching} on $u$, denoted by $S_{u,k}(i,j,l,m)$, as
\begin{equation}\label{eq:k-switching}
    S_{u,k}(i,j,l,m) = u[1,i) \cdot u[l,m) \cdot u[j,l) \cdot u[i,j) \cdot u[m..].
\end{equation}

Roughly speaking, the idea is to switch the positions of two factors who both
begin and end with the same factors of length $k-1$, and we allow the situation
where the factors can all overlap. We remark that, in the case of $j=l$,
$k$-switchings were considered in a different context in \cite{CdL04}.

\begin{example}
    Let $u = aabababaaabab$ and $k=4$. Let then $x = aba$, $y = bab$, $i = 2$,
		$j=3$, $l = 4$ and $m = 11$. We then have
		\begin{align*}
										u	&= a \cdot a \cdot b \cdot ababaaa\cdot bab\\
    S_{u,4}(i,j,l,m)	&= a \cdot ababaaa\cdot b \cdot a \cdot bab.
		\end{align*} One can check that $u \sim_4 S_{u,4}(i,j,l,m)$. Note that in this
		example the occurrences of $x$ and $y$ are overlapping. %%\xqed
\end{example}

In other words, for a word $u = a_1\cdots a_n$, a $k$-switching
$S_{u,k}(i,j,l,m) = v$ can be seen as a permutation $\sigma$ on the
set $\{1,\ldots,n\}$:
\begin{equation*}
\sigma: (1, \ldots, n) \mapsto (1, 2, \ldots, i-1, l, \ldots, m-1,
j,  \ldots, l-1, i,  \ldots, j-1, m, \ldots, n),
\end{equation*}
so that $v = a_{\sigma(1)}\cdots a_{\sigma(n)}$.
We remark that this permutation can also be considered
as a discrete interval exchange transformation.

We now show that performing a $k$-switching on a word
does not affect the number of occurrences of factors
of length $k$.
\begin{lemma}\label{lem:switchingImpliesEquivalence}
Let $u \in\words$ and $v = S_{u,k}(i,j,l,m)$ be a
$k$-switching on $u$. Then $u \sim_k v$.
\end{lemma}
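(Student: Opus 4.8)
The plan is to show directly that $\Psi_m(u) = \Psi_m(v)$ for all $1 \le m \le k$, which by the definition of $k$-abelian equivalence suffices. Since $v$ is obtained from $u$ by the permutation $\sigma$ described above, $u$ and $v$ agree on every individual letter up to reordering, so trivially $\Psi_1(u) = \Psi_1(v)$; the content is in controlling factors of length $2 \le m \le k$. The key observation is that $v$ decomposes as the concatenation of the same five blocks as $u$, namely $u[1,i)$, then $u[l,m)$, $u[j,l)$, $u[i,j)$, and finally $u[m..]$, merely reordered in the middle. I would therefore count occurrences of a factor $w$ with $|w| = m$ by classifying each occurrence according to whether it lies \emph{within} a single block or \emph{straddles} a boundary between two consecutive blocks.

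First I would handle the within-block occurrences. Each of the five factors $u[1,i)$, $u[i,j)$, $u[j,l)$, $u[l,m)$, $u[m..]$ appears verbatim as a contiguous block in both $u$ and $v$ (just in a different order), so the number of occurrences of $w$ entirely inside any fixed block is identical in $u$ and in $v$. Summing over the five blocks, the total within-block contribution is the same for both words. The only possible discrepancy thus comes from occurrences of $w$ that straddle a boundary, and here is where the hypotheses on the overlapping factors $x$ and $y$ of length $k-1$ enter. In $u$ the internal boundaries are $i$, $j$, $l$, $m$; the crucial point is that at each of these four cut positions the word that is glued ends, respectively, with a suffix of $x$ or $y$ and the following block begins with $x$ or $y$. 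Concretely, because $u[i,i+k-1) = u[l,l+k-1) = x$ and $u[j,j+k-1)=u[m,m+k-1)=y$, the length-$(k-1)$ windows straddling the boundaries are governed entirely by $x$ and $y$.

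The heart of the argument is to show that the straddling occurrences also match. I would argue that a factor $w$ of length $m \le k$ straddling a boundary is determined by a window of length at most $k-1$ on each side of the cut, i.e.\ by an overlap of length at most $k-1$; since the overlapping parts at the boundaries of $u$ are copies of $x$ and $y$, one can check that the multiset of boundary-straddling contexts in $v$ is a permutation of the multiset in $u$. Specifically, the new boundaries in $v$ join a block ending in (a suffix of) $x$ or $y$ to a block beginning with $x$ or $y$, exactly as in $u$; the four boundary contexts $(\text{prefix context}, \text{suffix context})$ available in $v$ are a rearrangement of the four available in $u$, because each block still carries the same length-$(k-1)$ prefix and suffix it had before, namely $x$ or $y$. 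Hence every straddling occurrence of $w$ in $u$ corresponds bijectively to one in $v$, and the straddling contributions agree. Combining the within-block and straddling counts gives $|u|_w = |v|_w$ for all $w$ with $|w| \le k$, hence $\Psi_m(u) = \Psi_m(v)$ for $1 \le m \le k$, which is the claim.

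I expect the main obstacle to be the careful bookkeeping of the straddling occurrences when the factors overlap, i.e.\ when $j = l$ (so $x = y$) or when consecutive blocks are short (shorter than $k-1$), so that a single occurrence of $w$ can span \emph{three} blocks rather than two. In those degenerate cases one cannot treat the boundaries independently, and the cleanest remedy is likely to phrase the whole count in terms of the length-$(k-1)$ windows at the cut positions and verify that the sequence of glued $(k-1)$-windows is preserved by $\sigma$; I would treat the general (possibly overlapping) case by reducing to this window-level statement rather than enumerating block pairs, thereby absorbing the degenerate configurations uniformly.
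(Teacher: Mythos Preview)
Your argument is sound and reaches the same conclusion, but the paper organizes it differently and more economically. Rather than splitting into within-block and boundary-straddling occurrences, the paper works directly with the permutation $\sigma$ and checks that for every position $p\le n-k+1$ the length-$k$ factor of $u$ at $p$ coincides with the length-$k$ factor of $v$ at the corresponding position under $\sigma$; this is immediate when $p$ and $p+k-1$ lie in the same segment of $\sigma$, and at a cut point the discrepancy is absorbed precisely because the two candidate continuations share the same $(k{-}1)$-prefix $x$ or $y$. The paper then concludes via the characterization lemma (equal $\Psi_k$ together with equal length-$(k{-}1)$ prefix and suffix), so it never needs to verify $\Psi_m$ for all $m\le k$ separately. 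Your proposed ``remedy'' for the degenerate cases (short blocks, $j=l$, factors spanning three blocks), namely passing to the level of the $(k{-}1)$-windows at the cuts, is exactly this $\sigma$-based viewpoint, so in effect your proof converges to the paper's once the overlaps are handled. One small sharpening of your boundary argument: what makes it work is not merely that the \emph{multiset} of boundary contexts is preserved, but the stronger per-block fact that each of the five blocks is followed, in both $u$ and $v$, by a block whose length-$(k{-}1)$ prefix is the same ($x$ after $u[1,i)$ and $u[j,l)$; $y$ after $u[i,j)$ and $u[l,m)$); this block-by-block match is what lets you pair straddling occurrences bijectively.
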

\begin{proof}
\begin{figure}
    \centering
        \begin{tikzpicture}
            %\draw [-] (-5,0)--(5,0);
            \node at (-5.5,0) {\footnotesize{$u$}};
                %\draw[pattern=north west lines] (-5,0) rectangle (-4,.1);
                %\draw[pattern=north west lines] (3,0) rectangle (5,.1);
								\draw[fill=gray] (-5,0) rectangle (-4,.1);
                \draw[fill=gray] (3,0) rectangle (5,.1);
                \draw[pattern=north east lines] (-2,0) rectangle (.5,.1);
                \draw[pattern=crosshatch] (-4,0) rectangle (-2.5,.2);
								\draw[fill=white] (-4,0) rectangle (-3.5,.1);
                    \node at (-3.9,-.2) {\tiny{$i$}};
                \draw[fill] (-3,0) circle (.03);
                    \node at (-3,-.2) {\tiny{$p$}};

                \draw[fill] (-2.5,0) rectangle (-2,.1);
                    \node at (-2.4,-.2) {\tiny{$j$}};

                \draw[fill] (2.5,0) rectangle (3,.1);
                \node at (2.6,-.2) {\tiny{$m$}};

                \draw[pattern=crosshatch dots] (.5,0) rectangle (2.5,.2);
								\draw[fill=white] (.5,0) rectangle (1,.1);
                \node at (.6,-.2) {\tiny{$l$}};

            \node at (-5.5,-1.5) {\footnotesize{$v$}};
                %\draw[pattern=north west lines] (-5,-1.5) rectangle (-4,-1.4);
                %\draw[pattern=north west lines] (3,-1.5) rectangle (5,-1.4);
								\draw[fill=gray] (-5,-1.5) rectangle (-4,-1.4);
                \draw[fill=gray] (3,-1.5) rectangle (5,-1.4);
                \draw[pattern=north east lines] (-1.5,-1.5) rectangle (1,-1.4);
                \draw[pattern=crosshatch dots] (-4,-1.5) rectangle (-2,-1.3);
								\draw[fill=white] (-4,-1.5) rectangle (-3.5,-1.4);
                    \node at (-3.9,-1.7) {\tiny{$\sigma(l)$}};
                \draw[fill] (-2,-1.5) rectangle (-1.5,-1.4);
                    \node at (-1.9,-1.7) {\tiny{$\sigma(j)$}};
                \draw[pattern=crosshatch] (1,-1.5) rectangle (2.5,-1.3);
								\draw[fill=white] (1,-1.5) rectangle (1.5,-1.4);
                    \node at (1.1,-1.7) {\tiny{$\sigma(i)$}};
                    \draw[fill] (2,-1.5) circle (.03);
                    \node at (2,-1.7) {\tiny{$\sigma(p)$}};
                \draw[fill] (2.5,-1.5) rectangle (3,-1.4);
                    \node at (2.6,-1.7) {\tiny{$m$}};
        \end{tikzpicture}
        \caption{Illustration of a $k$-switching. Here the white rectangles symbolize
				$x$ and the black rectangles symbolize $y$.}
        \label{fig:k-switching}
\end{figure}
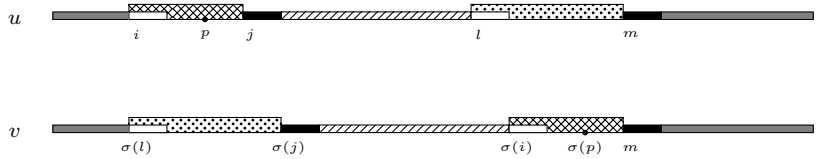
Let $\sigma$ be the permutation corresponding to the $k$-switching as
described above. It is straightforward to verify that, for any $p$,
$p\leq n-k+1$, the factor of length $k$ beginning at index $p$ in $u$
is equal to the factor starting at index $\sigma(p)$ in $v$ (see
\autoref{fig:k-switching}, where we have the case of no overlaps of the
factors $x$ and $y$. The other cases are analogous). This implies that
$\Psi_k(u) = \Psi_k(v)$. Furthermore, $\pref_{k-1}(u) = \pref_{k-1}(v)$
and $\suff_{k-1}(u) = \suff_{k-1}(v)$. It follows that $u\sim_k v$.
\end{proof}

Let us define a relation $R_k$ of $\words$ with $uR_kv$ if and only if
$v = S_{u,k}$ for some $k$-switching on $u$. Now $R_k$ is clearly symmetric,
so that the reflexive and transitive closure $R_k^*$ of $R_k$ is an
equivalence relation.

In this terminology, the above lemma asserts that $uR_k^*v$ implies
$u\sim_k v$. We now prove the converse, so that the relations $\sim_k$
and $R_k^*$ actually coincide.

\begin{proposition}\label{prop:rewritingCharacterization}
For two words $u,v\in\words$, we have $u\sim_k v$ if and only if
$u R_k^* v$.
\end{proposition}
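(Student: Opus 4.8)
The forward implication $uR_k^*v \Rightarrow u\sim_k v$ is immediate from \autoref{lem:switchingImpliesEquivalence} together with the fact that $\sim_k$ is an equivalence relation, so that it is preserved under the reflexive--transitive closure $R_k^*$. The substance is the converse, and my plan is to translate everything into the de Bruijn graph $dB(k-1)$ and then argue by induction. A word $w$ with $|w|\ge k-1$ is read as a walk $W_w$ in $dB(k-1)$ whose successive vertices are the factors of length $k-1$ and whose successive edges are the factors of length $k$. Under this dictionary $\Psi_k(w)$ records the multiset of edges traversed, $\pref_{k-1}(w)$ the initial vertex, and $\suff_{k-1}(w)$ the terminal vertex. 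Hence, by the equivalent formulation of $\sim_k$ from \autoref{sec:preliminaries}, $u\sim_k v$ says exactly that $W_u$ and $W_v$ start at the same vertex $s$, end at the same vertex $z$, and use the same multiset of edges; in particular $|u|=|v|=:n$, and both are Eulerian trails from $s$ to $z$ of the common sub-multigraph spanned by that edge multiset.

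I would then induct on $n-d$, where $d$ is the length of the longest common prefix of $u$ and $v$. If $d=n$ then $u=v$ and $uR_k^*u$ by reflexivity. Otherwise let $p=d+1$ be the first differing position; since the length-$(k-1)$ prefixes coincide, $p\ge k$. Write $x=u[p-k+1,p)=v[p-k+1,p)$ for the de Bruijn vertex reached just before position $p$; in $u$ this vertex is left along the edge ending in the letter $u[p]$, and in $v$ along the edge ending in $v[p]\ne u[p]$. Because $u$ and $v$ traverse the same multiset of edges and use the same edges throughout the common prefix $w[1,p)$, the edge that $v$ uses at position $p$ must be used by $u$ at some later position; that is, there is $q>p$ with $u[q-k+1,q)=x$ and $u[q]=v[p]$. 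This furnishes a second occurrence of the vertex $x$, at position $l:=q-k+1>i:=p-k+1$. The aim is to perform a single $k$-switching $S_{u,k}(i,j,l,m)$ that transplants the continuation starting at $l$ to the front: the resulting word $u'$ automatically satisfies $u'\sim_k u$ by \autoref{lem:switchingImpliesEquivalence}, while it now agrees with $v$ on the prefix of length $p=d+1$, since relocating the block $u[l,m)$ reproduces $x$ on positions $i,\dots,p-1$ and places $u[q]=v[p]$ at position $p$. Iterating then yields $uR_k^*v$.

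The crux, and the step I expect to be the main obstacle, is to exhibit legitimate switching indices: one must choose an anchor vertex $y$ and positions $j,m$ with $u[j,j+k-1)=u[m,m+k-1)=y$ and $i<j\le l<m\le n-k+2$, subject to $m-l\ge k$ so that the relocated block $u[l,m)$ is long enough to actually carry $u[q]=v[p]$ to position $p$. The natural candidate is the shared terminal vertex $z$, occurring at $m=n-k+2$; one then needs $z$, or some other vertex, to recur in the window $(i,l]$, and one must control the cases where the occurrences of $x$ and $y$ overlap or where the edge at $q$ lies too close to the end of the word. Here the definition's allowance $j=l$ (so that $y=x$ and one simply swaps two consecutive excursions based at $x$) provides useful extra flexibility, and the existence of a suitable common anchor is ultimately forced by the connectivity of the traversed sub-multigraph underlying the Eulerian trail $W_u$. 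Carrying out this case analysis over overlaps, boundary effects, and the choice of $y$ is where the genuine work lies; once a single prefix-extending switching is produced in every case, the induction closes and the proposition follows.
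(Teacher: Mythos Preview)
Your overall strategy---induct on the length of the longest common prefix and at each step exhibit a single $k$-switching that extends it---is exactly the paper's approach. The gap is precisely where you flag it: you have not produced the anchor $y$ and the indices $j,m$, and neither of your suggested candidates works in general. Taking $y$ to be the common terminal vertex $z$ with $m=n-k+2$ fails whenever $z$ does not occur in the window $(i,l]$, which can certainly happen (e.g., when $\suff_{k-1}(u)$ occurs only once in $u$). Likewise, taking $j=l$ and $y=x$ fails whenever $l$ is the last occurrence of $x$ in $u$, so there is no admissible $m>l$. Invoking ``connectivity of the traversed sub-multigraph'' is not enough to conjure a repeated vertex in the required window; some additional idea is needed. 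Also, the extra constraint $m-l\ge k$ you impose is unnecessary: the switching is designed so that the $k$-factor at position $i$ of $S_{u,k}(i,j,l,m)$ is automatically the $k$-factor at position $l$ of $u$, regardless of how short $u[l,m)$ is.

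The paper closes the gap via a minimality argument (its \autoref{claim:technicalSwitching}). Choose $l$ to be the \emph{leftmost} occurrence of $xb$ in $u$ with $l>i$; then look in $v$ and pick the \emph{minimal} index $q>i$ such that the $k$-factor $v[q,q+k)$ already occurs somewhere in $u[i,l+k-1)$ (such $q$ exists because $xa=u[i,i+k)$ reappears in $v$ after position $i$). Set $y=v[q,q+k-1)$; by construction $y$ occurs at some position $j$ with $i<j\le l$ (taking $j=l$ if $y=x$). The minimality of $q$ now forces the preceding $k$-factor $v[q-1,q+k-1)$ of $v$ to occur in $u$ only at some position $m'\ge l$, whence $y$ occurs at $m=m'+1>l$. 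This yields valid indices $i<j\le l<m$ for a $k$-switching that makes the new word agree with $v$ on one more letter. Your proposal has the right skeleton but is missing this minimality device, which is the actual content of the proof.
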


For the proof of the proposition, we need the following technical claim
which will also be used later:
\begin{claim}\label{claim:technicalSwitching}
	Let $w\sim_k w'$, $w\neq w'$. Let $\lambda x$ be the longest common prefix of
	$w$ and $w'$ with $\lambda \in \words$, $x\in \Sigma^{k-1}$, whence
	$w = \lambda x a\mu$ and $w' = \lambda x b\mu'$ for some $\mu, \mu'\in \words$,
	$a,b\in\Sigma$, $a\neq b$. Then there exist $y\in\Sigma^{k-1}$ and indices
	$j,l,m$, with $|\lambda|+1 < j\leq l < m$, such that
	\begin{align*}
		w[j,j+k-1) &= y, & w[l,l+k) &= xb,\text{ and} & w[m,m+k-1) &= y.
	\end{align*}
\end{claim}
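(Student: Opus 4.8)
The plan is to pass to the de Bruijn graph $dB_\Sigma(k-1)$ and read $w$ and $w'$ as walks. A word $u$ with $|u|\ge k-1$ determines a walk whose successive vertices are the $(k-1)$-factors $u[t,t+k-1)$ and whose successive edges are the $k$-factors of $u$; by the characterization recalled in \autoref{sec:preliminaries}, $w\sim_k w'$ says exactly that the walk $W$ of $w$ and the walk $W'$ of $w'$ have the same first vertex $\pref_{k-1}$, the same last vertex $\suff_{k-1}$, and traverse the same multiset of edges. Put $s=|\lambda|$. The common prefix $\lambda x$ forces $W$ and $W'$ to share their first $s$ edges and to sit at the vertex $x$ after $s$ steps; there $W$ leaves by the edge $xa$ while $W'$ leaves by $xb$. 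Since the total edge multisets agree and the first $s$ edges coincide, the edges occurring at positions $>s$ constitute a single multigraph $G$, on which both $W$ and $W'$ induce Eulerian paths from $x$ to the common terminal vertex.

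First I would pin down $l$. As $G$ contains the edge $xb$ while $W$ uses $xa$ at the branch point, a direct count of occurrences of the factor $xb$ does the job: using that $w$ and $w'$ coincide up to position $s+k-1$, the occurrences of $xb$ at positions $\le s$ agree, and $w$ lacks the occurrence at $s+1$ that $w'$ has, so the equality $|w|_{xb}=|w'|_{xb}$ forces an occurrence of $xb$ in $w$ at a position exceeding $s+1$. Let $l$ be the least such, so $w[l,l+k)=xb$, $l\ge s+2$, and $W$ uses no copy of $xb$ before reaching $x$ at step $l-1$. If $x$ happens to occur again in $w$ at some position $m>l$, I am done at once with $y=x$ and $j=l$, since then $w[j,j+k-1)=x=w[m,m+k-1)$ and $s+1<j\le l<m$.

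The substantial case is when $x$ does not recur after position $l$. I would then exploit both Eulerian paths on $G$. Let $A$ be the set of vertices on the sub-walk of $W$ from step $s$ (the branch vertex $x$) to step $l-1$, and let $B$ be the set of vertices on the remaining sub-walk from step $l$ to the end; then $x\in A$, and by the case hypothesis $x\notin B$. The walk $W'$ departs $x$ along $xb$ into $B$, but, being Eulerian, it must eventually use the edge $xa$ and hence return to $x\in A$. If $A$ and $B$ were disjoint this would be impossible: the only edge joining the two parts is the unique copy of $xb$, directed from $A$ to $B$ and already spent, and there is no edge from $B$ back to $A$, so from inside $B$ every edge available to $W'$ keeps it in $B$; it could never reach a vertex of $A$ and so could never traverse $xa$, contradicting that it is Eulerian on $G$. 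Hence $A\cap B$ contains a vertex $y\ne x$, which is a $(k-1)$-factor of $w$ occurring at some position $j\in(s+1,l]$ and at some position $m>l$, giving $w[j,j+k-1)=y=w[m,m+k-1)$ with the required inequalities $s+1<j\le l<m$.

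The crux, and the step I expect to cost the most care, is this crossing argument. One must allow $W$ (and $W'$) to pass through $x$ several times before step $l-1$, confirm that the vertex by which $W'$ first re-enters $A$ genuinely lies in $A\cap B$ and differs from $x$, and do the position bookkeeping so that the witness $y$ straddles $l$ with $j\le l<m$. The minimality of $l$ is precisely what makes this clean: it guarantees that $xb$ occurs only once in $G$ (another copy would put $x$ back into $B$), so it is the sole link from $A$ to $B$, and once used there is no way across except through the forced return.
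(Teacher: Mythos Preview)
Your argument is correct. At its core it is the same crossing idea as the paper's, but you package it more structurally. The paper proceeds by direct position-chasing: it fixes the least index $q>i$ for which the $k$-factor $w'[q,q+k)$ already occurs in $w[i,l+k-1)$, takes $y=w'[q,q+k-1)$, reads off an index $j\in(i,l]$ from that occurrence, and obtains $m>l$ from the fact that the preceding $k$-factor $w'[q-1,q+k-1)$ (by minimality of $q$) must occur in $w$ only at some position $\ge l$. Your proof abstracts this into a cut argument on the de~Bruijn graph: split the vertices visited by $W$ after the branch into the sets $A$ (positions $s{+}1,\ldots,l$) and $B$ (positions $>l$), observe that under $A\cap B=\emptyset$ the only edge of the tail-multigraph $G$ crossing the cut is the single copy of $xb$ from $A$ to $B$, and then use that the Eulerian path $W'$ would be trapped in $B$ while still owing the edge $xa$ inside $A$. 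The two arguments are equivalent---your $y\in A\cap B$ is exactly the paper's $y=w'[q,q+k-1)$, and your Case~1 ($x$ recurring after $l$, whence $y=x$, $j=l$) is the paper's sub-case $j'=i$. What your framing buys is that the bookkeeping about why $j>s+1$ and $m>l$ becomes automatic (since $y\neq x$ forces $y$ to avoid the positions $s+1$ and $l$ in $A$), whereas the paper's buys an explicit witness and avoids the case split up front.
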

\begin{proof}
It follows from $w\sim_k w'$ that $w'$ has an occurrence of $xa$ and $w$
has an occurrence of $xb$ occurring after the common prefix $\lambda$.
We let $i = |\lambda|+1$ be the position (i.e., the starting index
of the occurrence) of $xa$ in $w$ and let $l$ be the minimal position
(leftmost occurrence) of $xb$ in $w$ with $l>i$. Let $p$ be a position
of $xa$ in $w'$ with $p>i$, (see \autoref{fig:illustration_of_contradiction}).

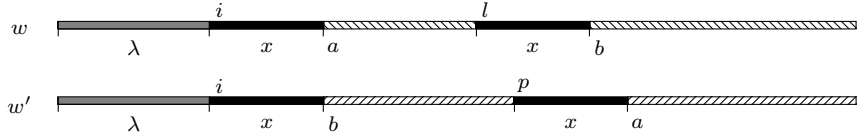
\begin{figure}
    \centering
\begin{tikzpicture}
\node at (-7,0) {\footnotesize{$w$}};
    %\draw [] (-6.5,0) rectangle (4,0);
    \draw [fill=gray] (-6.5,0) rectangle (-4.5,.1);
		\draw [thin,|-|] (-6.5,0) -- (-4.5,0);
    \node at (-5.5,-.25) {\footnotesize{$\lambda$}};

    \draw [fill=black] (-4.5,0) rectangle (-3,.1);
		\draw [thin,-|] (-4.5,0) -- (-3,0);
    \node at (-3.75,-.25) {\footnotesize{$x$}};
    \node at (-4.375,.25) {\footnotesize{$i$}};
    \node at (-2.875,-.25) {\footnotesize{$a$}};
		
		\draw [pattern=north west lines] (-3,0) rectangle (-1,.1);

    \draw [thin,|-|] (-1,0)--(.5,0);
		\draw [fill=black] (-1,0) rectangle (.5,.1);
    \node at (-.25,-.25) {\footnotesize{$x$}};
    \node at (-.875,.25) {\footnotesize{$l$}};
    \node at (.625,-.25) {\footnotesize{$b$}};
		
		\draw [pattern=north west lines] (.5,0) rectangle (4,.1);
    %\draw [thin,|-|] (2.5,0)--(4,0);
    %\node at (3.25,-.25) {\footnotesize{$\suff_{k-1}(w)$}};

\node at (-7,-1) {\footnotesize{$w'$}};
    %\draw [|-|] (-6.5,-1)--(4,-1);
		\draw [fill = gray] (-6.5,-1) rectangle (-4.5,-.9);
    \draw [thin,|-|] (-6.5,-1)--(-4.5,-1);
    \node at (-5.5,-1.25) {\footnotesize{$\lambda$}};

		\draw [fill=black] (-4.5,-1) rectangle (-3,-.9);
    \draw [thin,-|] (-4.5,-1)--(-3,-1);
    \node at (-3.75,-1.25) {\footnotesize{$x$}};
		\node at (-4.375,-.75) {\footnotesize{$i$}};

		\draw [pattern=north east lines] (-3,-1) rectangle (-.5,-.9);
    \node at (-2.875,-1.25) {\footnotesize{$b$}};

		\draw [fill=black] (-.5,-1) rectangle (1,-.9);
    \draw [thin,|-|] (-.5,-1) -- (1,-1);
    \node at (.25,-1.25) {\footnotesize{$x$}};
    \node at (1.125,-1.25) {\footnotesize{$a$}};
		\node at (-.375,-.75) {\footnotesize{$p$}};

		\draw [pattern = north east lines] (1,-1) rectangle (4,-.9);
    %\draw [thin,|-|] (2.5,-1)--(4,-1);
    %\node at (3.25,-1.25) {\footnotesize{$\suff_{k-1}(w)$}};
\end{tikzpicture}
\caption{Illustration of the proof of \autoref{claim:technicalSwitching}.}
\label{fig:illustration_of_contradiction}
\end{figure}

Consider then the set $F_k(w'[i..])$; each word in this set occurs
somewhere in $w[i..]$, since $w\sim_k w'$. Let then $q$, $q\geq i$,
be the minimal index such that the factor $w'[q,q+k)$ occurs in
$w[i,l+k-1)$.
Such an index exists since, for example, $w'[p,p+k) = w[i,i+k)$.
Moreover, by the minimality of $l$, we have $q>i$. Let $y = w'[q,q+k-1)$
and let $j'$, $i\leq j'\leq l-1$, be a position of $y$ in $w$. We shall
now choose the index $j$ in the claim. If $j' > i$ we choose $j = j'$.
If $j' = i$, then necessarily $x = y$ and we choose $j = l$.

We shall now choose the index $m$ in the claim. By the choice of $q$,
we have that $w'[q-1,q+k-1)$, an element of $F_k(w'[i..])$, occurs at
some position $m'$, $m'\geq l$, in $w$. It follows that $y$ occurs in $w$
at position $m = m'+1$, with $m > l$. We have now obtained the factor $y$
and the positions of $y$ and $xb$ as claimed.
\end{proof}	

\begin{proof}[Proof of \autoref{prop:rewritingCharacterization}]
It is enough to show that $u\sim_k v$ implies $u R_k^* v$, since
the converse follows from \autoref{lem:switchingImpliesEquivalence}.

More precisely, we shall prove the following: Let $u\sim_k v$ and suppose
that for the longest common prefix $\nu$ of $u$ and $v$, we have
$|\nu|<|v|$. Then there exists a word $z$ such that $u R_k z$ and the
longest common prefix of $z$ and $v$ has length at least $|\nu| + 1$. It
is clear that \autoref{prop:rewritingCharacterization} follows
immediately from \autoref{lem:switchingImpliesEquivalence} and this observation.

Indeed, applying \autoref{claim:technicalSwitching} to $w = u$ and
$w' = v$, with $\nu = \lambda x$, we obtain indices $i,j,l,m$ which
give rise to a $k$-switching $S_{u,k}(i,j,l,m) = z$, such that the
longest common prefix of $z$ and $v$ has length at least $|\nu|+1$.
This concludes the proof.
\end{proof}

\section{The cardinality of a \texorpdfstring{$k$}{k}-abelian equivalence class}\label{sec:cardinality}
In this section we analyze the sizes of $k$-abelian equivalence
classes. One of the interesting questions there is the following:
Given $n$ and $\Sigma$, which cardinalities of $k$-abelian classes
of words of length $n$ over the alphabet $\Sigma$ exist? We begin
with a simple observation:

\begin{claim}\label{claim:example}1) For any pair $n,m \in \N$, with
$m\leq n-2$, there exists a ternary word $w$, such that $|w|=n$ and $|[w]_2| = m$.

\noindent 2) For any $p\in\N$ there exists a sequence of binary words
$(w_n)_{n\in\N}$, $|w_n| = n$, such that $|[w_n]_2| = \Theta(n^p)$.
\end{claim}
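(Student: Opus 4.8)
The plan is to translate the counting of a $2$-abelian class into a counting of walks in the de Bruijn graph $dB(1)$, whose vertices are the letters and whose edges are the length-$2$ factors. By the characterization of $\sim_2$ recalled above, a word $u$ of length $n$ satisfies $u\sim_2 w$ if and only if it is a walk in $dB(1)$ using each edge $xy$ exactly $|w|_{xy}$ times and having the same first and last letters as $w$; thus $|[w]_2|$ equals the number of such walks. Over a binary alphabet this count is completely explicit: writing a word in terms of its maximal runs $a^{r_0}b^{s_1}a^{r_1}\cdots$, the class of $w$ consists of exactly those words with the same start letter, end letter, total number $R$ of $a$'s, total number $S$ of $b$'s, and number $k$ of $b$-runs. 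Counting the admissible compositions of $R$ and $S$ into positive parts then yields a product of two binomial coefficients. This formula is the engine for both parts, and the third letter $c$ enters only as an inert spacer.

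For part 1, I would first exhibit, for each $m\geq 1$, a short binary base word of class size exactly $m$: the word $w_0 = aba^m$ has length $m+2$, and its $2$-abelian class consists precisely of the words $a^pba^q$ with $p+q=m+1$ and $p,q\geq 1$, so $|[w_0]_2|=m$. To reach an arbitrary length $n\geq m+2$ while keeping the cardinality, I append a block of the fresh letter $c$: set $w = aba^m c^{\,n-m-2}$. Since $c$ occurs only in the terminal block, the unique edge entering it is $ac$ (used once), and its only internal edge is $cc$, every word equivalent to $w$ must end in $c^{\,n-m-2}$ preceded by an $a$; hence its class is in bijection with $[w_0]_2$, giving $|[w]_2| = m$. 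The use of a third letter here is essential, since padding with $a$ or $b$ would merge with the existing run structure and change the count; this is precisely why the statement is phrased for ternary words.

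For part 2, I would read off the growth rate directly from the binomial formula. Fixing the number of runs $k$ to be a constant and letting both $R$ and $S$ grow like $\Theta(n)$, each binomial coefficient $\binom{\Theta(n)}{k}$ is $\Theta(n^{k})$, so the product is a fixed power of $n$. Concretely, for odd $p=2k-1$ I take $w_n$ with start and end letter $a$ and exactly $k$ occurrences of $ab$ (hence $k$ of $ba$), with all runs of length $\Theta(n)$; then $|[w_n]_2| = \binom{R-1}{k}\binom{S-1}{k-1} = \Theta(n^{2k-1}) = \Theta(n^{p})$. For even $p=2k-2$ I instead take $w_n$ starting with $a$ and ending with $b$ and having $k$ runs of each letter, giving $|[w_n]_2| = \binom{R-1}{k-1}\binom{S-1}{k-1} = \Theta(n^{2k-2}) = \Theta(n^{p})$. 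Choosing all runs of roughly equal length $\approx n/(2k+1)$ makes $R,S=\Theta(n)$ and fixes the total length at $n$.

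The main obstacle is establishing the counting formula rigorously: one must verify that over a binary alphabet $u\sim_2 w$ holds exactly when $u$ and $w$ share endpoints and run-count data $(R,S,k)$ — equivalently the same edge multiplicities, using the balance relation $|w|_{ab}=|w|_{ba}$ forced at the internal vertex of an $a$-to-$a$ walk — and that the set of such $u$ is in bijection with the pairs of compositions being counted. The rest is bookkeeping: matching the parity of $p$ to the correct choice of endpoints, and confirming both directions of the $\Theta$ bound, which follows from $\binom{cn+O(1)}{k}=\Theta(n^{k})$ for fixed $k$.
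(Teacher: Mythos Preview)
Your proposal is correct in both parts, and Part~1 is essentially the paper's argument with the roles of the letters permuted: the paper takes $w = a^{n-m-2}cbc^{m}$, noting that $|[cbc^m]_2| = m$ and that the $a$-block is forced as a prefix, which is exactly your spacer idea.

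For Part~2 your approach works but is more elaborate than necessary. The paper avoids the odd/even case split by forcing all $b$-runs to have length~$1$: with $w_n = (ab)^p a^{n-2p}$, any $2$-abelian equivalent word has the form $a^{r_0}ba^{r_1}\cdots ba^{r_p}$ with each $r_i\geq 1$ and $\sum r_i = n-p$, so $|[w_n]_2| = \binom{n-p-1}{p} = \Theta(n^p)$ directly, a single binomial for every $p$. Your product-of-two-binomials formula is the natural generalization when both letters are allowed long runs, and it is what one would reach for if a more refined growth rate were needed; here, though, collapsing one family of runs to singletons gives the target exponent in one stroke.
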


\begin{proof} 1) Choose $w = a^{n-m-2}cbc^{m}$. The claim
follows since any $w'\sim_2 w$ has $a^{n-m-2}$ as a prefix and
$|[cbc^m]_2| = m$ for all $m\in\N$.

\noindent 2) Let $w_n = (ab)^pa^{n-2p}$ for $n\geq 2p$. Then
$|[w_n]_2| = \binom{n-p-1}{p} = \Theta(n^p)$. \end{proof}

In general, it is not clear which orders of growth one can achieve.
In the following we will prove a formula for counting $|[w]_k|$
for a given word $w$.

\subsection{Equivalence classes as Eulerian cycles in weighted de Bruijn graphs}
In the following, when talking about graphs, we mean directed
multigraphs with loops. For a fixed graph $G$, we denote by $d_G^+(u)$
(resp., $d_G^-(u)$) the number of outgoing (resp., incoming) edges of $u$.
For $u,v\in V$, the number of edges from $u$ to $v$ in $G$ is denoted by
$m_G(u,v)$. When clear from context, we omit the subscript $G$.

Let $G = (V,E)$ and let the set of vertices be ordered as
$V = \{v_1,\ldots,v_{|V|}\}$. The \emph{adjacency matrix of $G$} is the
matrix $(a_{ij})_{i,j}$, where $a_{ij} = m(v_i,v_j)$.

We repeat an observation made in \cite{KSZ13} connecting $k$-abelian
equivalence with Eulerian paths in certain multigraphs. Let
$f \in \N^{\Sigma^k}$ be an arbitrary vector. We modify the de Bruijn graph
$dB(k-1)$ with respect to $f$ into $G_f = (V,E)$ as follows.
We define $V$ as the set of words $x\in\Sigma^{k-1}$ such that $x$ is a
prefix or a suffix of a word $z\in\Sigma^k$ for which $f[z]>0$.
We define the set of edges as follows: for each $z\in\Sigma^k$ with $f[z]>0$, we take the edge from $u$ to $v$ with multiplicity $f[z]$,
where $u$ is the length $k-1$ prefix of $z$, and $v$ is the length $k-1$ suffix of $z$.

Note that for $f = \Psi_k(w)$, the graph $G_f$ resembles the
\emph{Rauzy graph} of $w$ of order $k-1$ (see \cite{Rau82}), with
$V = F_{k-1}(w)$ and the edges of $G_f$ correspond to the set
$F_k(w)$ with multiplicities.

In the following,  for $u,v \in \Sigma^{k-1}$, we denote by $\Sigma(u,v)$
the set of words which begin with $u$ and end with $v$: $\Sigma(u,v) = u \words \cap \words v$.

\begin{lemma}[Lemma 2.12 in \cite{KSZ13}]\label{lem:graphStructure}
For a vector $f \in \N^{\Sigma^k}$ and words $u,v \in \Sigma^{k-1}$, the following
are equivalent:
\begin{enumerate}
  \item there exists a word $w\in \Sigma(u,v)$ such that $f = \Psi_k(w)$,
  \item $G_f$ has an Eulerian path starting from $u$ and ending at $v$,
  \item the underlying graph of $G_f$ is connected, and $d^-(s)=d^+(s)$
		for every vertex $s$, except that if $u\neq v$, then $d^-(u) = d^+(u)-1$
		and $d^-(v)=d^+(v)+1$.
\end{enumerate}
\end{lemma}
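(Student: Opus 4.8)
The plan is to prove the three-way equivalence by establishing the cycle $(1)\Rightarrow(2)\Rightarrow(3)\Rightarrow(1)$, exploiting the fact that the graph $G_f$ is built precisely so that words in $\Sigma(u,v)$ with Parikh vector $f$ correspond to Eulerian paths from $u$ to $v$.

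\medskip
\noindent\textbf{The implication $(1)\Rightarrow(2)$.} Suppose $w\in\Sigma(u,v)$ satisfies $f=\Psi_k(w)$. Since $|w|\ge k-1$, write $w=a_1a_2\cdots a_n$ and read off its \emph{length-$(k-1)$ window sequence}: the windows $w[1,k), w[2,k+1),\ldots,w[n-k+2,n+1)$ form a walk in $dB(k-1)$, where consecutive windows overlap in $k-2$ letters and the transition between two successive windows is labelled by the length-$k$ factor they span. I would argue that this walk traverses, for each $z\in\Sigma^k$, exactly $|w|_z = f[z]$ edges carrying label $z$; since $G_f$ has exactly $f[z]$ parallel edges for each such $z$, this walk is an Eulerian path. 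Its first vertex is $w[1,k)=\pref_{k-1}(w)=u$ and its last vertex is $\suff_{k-1}(w)=v$, giving an Eulerian path from $u$ to $v$.

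\medskip
\noindent\textbf{The implication $(2)\Rightarrow(1)$.} Conversely, given an Eulerian path from $u$ to $v$ in $G_f$, I would reconstruct a word $w$ by spelling out the walk: start from the vertex $u\in\Sigma^{k-1}$, and for each edge traversed (which corresponds to some $z\in\Sigma^k$ whose length-$(k-1)$ prefix is the current vertex) append the last letter of $z$. The resulting word has $u$ as its length-$(k-1)$ prefix and $v$ as its length-$(k-1)$ suffix, and by the same window-counting correspondence as above, each $z\in\Sigma^k$ occurs exactly $f[z]$ times as a factor, so $\Psi_k(w)=f$ and $w\in\Sigma(u,v)$.

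\medskip
\noindent\textbf{The implication $(2)\Leftrightarrow(3)$.} This is the classical Euler condition for existence of an Eulerian path in a directed multigraph: such a path exists if and only if the underlying undirected graph is connected (after ignoring isolated vertices) and the in-degree equals the out-degree at every vertex, with the two exceptions permitted when the path is open, namely the source $u$ has one more outgoing than incoming edge and the sink $v$ has one more incoming than outgoing edge. When $u=v$ the path is actually an Eulerian circuit and the balanced-degree condition holds everywhere. I would cite or briefly recall this standard theorem (the directed analogue of Euler's theorem) rather than reprove it.

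\medskip
\noindent\textbf{Main obstacle.} The routine combinatorial bookkeeping is not the difficulty; the delicate point is the exact correspondence between occurrences of length-$k$ factors in $w$ and edges of $G_f$, and in particular ensuring that the edge-multiplicity counting is airtight at the boundary windows (the first and last length-$(k-1)$ windows) so that the path endpoints come out as exactly $u$ and $v$ rather than off by one overlap. Handling the degenerate cases, where a single length-$k$ factor may appear with large multiplicity or where $u=v$ forces a closed walk, requires care in matching the degree-balance statement of $(3)$ precisely to the open-versus-closed Eulerian path dichotomy. Since this lemma is quoted from \cite{KSZ13}, it would also be legitimate to present the proof compactly by appealing to that reference, indicating only the window-to-edge dictionary that makes the equivalence transparent.
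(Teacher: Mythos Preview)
Your argument is correct and is the standard proof of this fact: the window-to-edge dictionary giving $(1)\Leftrightarrow(2)$, together with the classical directed Euler criterion for $(2)\Leftrightarrow(3)$, is exactly how one establishes this equivalence. Note, however, that the paper does not actually prove this lemma; it is imported verbatim as Lemma~2.12 from \cite{KSZ13} and simply cited as a known tool, so there is no in-paper proof to compare against---your sketch is precisely the kind of compact justification the reference would contain.
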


The following corollary is immediate.
\begin{corollary}\label{cor:EulerianCycleEquivalence}
For a word $w\in \Sigma(u,v)$ and $k\geq 1$, we have that $w' \sim_k w$ if and only if
$w'$ induces an Eulerian path from $u$ to $v$ in $G_{\Psi_k(w)}$.
\end{corollary}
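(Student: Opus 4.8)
The plan is to derive this corollary directly from \autoref{lem:graphStructure}, treating it as essentially a restatement specialized to the case where the target vector $f$ is the $k$-abelian fingerprint $\Psi_k(w)$ of an actual word $w$. First I would fix notation: set $f = \Psi_k(w)$, so that by construction $G_f = G_{\Psi_k(w)}$, and recall from \autoref{cor:EulerianCycleEquivalence}'s hypothesis that $w \in \Sigma(u,v)$, meaning $u = \pref_{k-1}(w)$ and $v = \suff_{k-1}(w)$. The key is that \autoref{lem:graphStructure} already characterizes, for an arbitrary vector $f$, exactly which words realize it via an Eulerian path; I only need to translate ``realizes the same vector $f$ and shares the same prefix/suffix'' into the language of $k$-abelian equivalence.

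For the forward direction, suppose $w' \sim_k w$. By the characterization lemma recalled just after the definition of $\sim_k$ (for words long enough), this gives $\Psi_k(w') = \Psi_k(w) = f$, together with $\pref_{k-1}(w') = \pref_{k-1}(w) = u$ and $\suff_{k-1}(w') = \suff_{k-1}(w) = v$. Hence $w' \in \Sigma(u,v)$ and $f = \Psi_k(w')$, so condition (1) of \autoref{lem:graphStructure} holds with the word $w'$; by the equivalence (1) $\Leftrightarrow$ (2) of that lemma, $w'$ induces an Eulerian path in $G_f = G_{\Psi_k(w)}$ from $u$ to $v$, which is exactly the conclusion. For the converse, suppose $w'$ induces an Eulerian path from $u$ to $v$ in $G_{\Psi_k(w)}$. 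Reading off the edge labels along this path reconstructs a word $w'$ with $\Psi_k(w') = f = \Psi_k(w)$ and with prefix $u$ and suffix $v$; again invoking the long-enough characterization of $\sim_k$, the equality of the $\Psi_k$-vectors together with matching length-$(k-1)$ prefix and suffix yields $w' \sim_k w$.

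The main subtlety I would want to address carefully is the correspondence between a word and the Eulerian path it ``induces.'' An Eulerian path in $G_{\Psi_k(w)}$ is a sequence of edges, and each edge corresponds to a word of length $k$ (namely the label $z \in \Sigma^k$ that produced it), with consecutive edges overlapping in their length-$(k-1)$ suffix/prefix by the de Bruijn adjacency condition. Spelling out the path thus yields a word whose length-$k$ factor multiset is precisely the edge multiset of $G_{\Psi_k(w)}$, i.e. $\Psi_k$ equals $f$, and whose first $k-1$ letters are the start vertex $u$ and last $k-1$ letters are the end vertex $v$. This bookkeeping is already implicit in the proof of \autoref{lem:graphStructure} (it is what makes (1) $\Leftrightarrow$ (2) work), so the only real content of the corollary is recognizing that ``$w' \sim_k w$'' is equivalent to the conjunction ``$\Psi_k(w') = \Psi_k(w)$ and same prefix/suffix,'' which is precisely the recalled characterization lemma. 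Consequently the proof is short and contains no genuine obstacle beyond this translation; I would simply chain the two equivalences.
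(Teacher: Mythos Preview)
Your proposal is correct and matches the paper's approach exactly: the paper states that the corollary is immediate from \autoref{lem:graphStructure}, and your argument spells out precisely this immediacy by combining the equivalence (1)$\Leftrightarrow$(2) of that lemma with the prefix/suffix characterization of $\sim_k$. There is nothing to add.
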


\subsection{On Eulerian cycles in directed multigraphs}
We recall some notions and well-known results from graph theory.
\begin{definition}
Let $G=(V,E)$ be a graph. The \emph{Laplacian matrix} $\Delta$ of
$G$ is defined as
\[\Delta_{uv} = \begin{cases} -m(u,v), & \mbox{ if } u\neq v,\\
  d^+(u)-m(u,v), & \mbox{ if } u=v. \end{cases}\]
\end{definition}
For the Laplacian $\Delta$ of a graph $G$ and a vertex $v$ of $G$,
we denote by $\Delta(v)$ the matrix obtained by removing from $\Delta$
the row and column corresponding to $v$.

\begin{remark}
	We note that for a directed multigraph $G$ and a vertex $v$,
	$\det(\Delta(v))$ counts the number of \emph{rooted spanning trees with
	root $v$} in $G$. This result is known as Kirchhoff's matrix tree theorem
	(for a proof, see \cite{A-EdB51}).
\end{remark}

A graph $G$ is called \emph{Eulerian} if there exists an Eulerian
\emph{cycle}. We recall the BEST theorem, first discovered by C. A. B. Smith
and W. T. Tutte in 1941 and later generalized by T. van Aardenne-Ehrenfest
and N. G. de Bruijn (see \cite{A-EdB51}). For this, let $\epsilon(G)$ denote
the number of distinct Eulerian cycles in an Eulerian graph $G$. Here two
cycles are considered to be the same, if one is a cyclic shift of the other.
Equivalently, $\epsilon(G)$ counts the number of distinct Eulerian cycles
beginning from a fixed edge $e$.
\begin{theorem}[BEST theorem]\label{thm:BEST}
Let $G$ be a connected directed Eulerian multigraph. Then
\begin{equation*}
\epsilon(G) = \det(\Delta(u)) \prod_{v\in V}(d^+(v)-1)!,
\end{equation*}
where $\Delta$ is the Laplacian of $G$ and $u$ is any vertex of $G$.
\end{theorem}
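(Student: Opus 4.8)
The plan is to prove the theorem via the classical \emph{last-exit} (or BEST) correspondence: an explicit bijection between Eulerian cycles and pairs consisting of a spanning in-tree together with a choice of ordering of the out-edges at each vertex. Since $\epsilon(G)$ equals the number of Eulerian cycles beginning with a fixed edge $e$, I would fix $u$ to be the tail of $e$ and count the Eulerian cycles $C$ that start by traversing $e$. The resulting count will turn out to be independent of the chosen vertex, because for a connected Eulerian graph $\det(\Delta(u))$ --- the number of spanning in-trees rooted at $u$ furnished by Kirchhoff's matrix tree theorem recalled in the Remark --- is the same for every $u$; this justifies the ``any vertex $u$'' appearing in the statement.

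First I would set up the forward map. Given such a cycle $C$, for each vertex $v\neq u$ let $T(v)$ be the \emph{last} edge through which $C$ leaves $v$; this selects one out-edge at each of the $|V|-1$ vertices other than $u$. The key claim is that the edges $T(v)$ form a spanning in-tree rooted at $u$, i.e.\ following them from any vertex leads to $u$. To exclude a directed cycle among them, suppose $w_0\to w_1\to\cdots\to w_{r-1}\to w_0$ were such a cycle with all $w_i\neq u$, and let $w_i$ be the vertex whose last-exit edge $T(w_i)$ is traversed latest in $C$. Immediately after traversing $T(w_i)$ the cycle sits at $w_{i+1}\neq u$ and must later leave $w_{i+1}$ again, contradicting that $T(w_{i+1})$ --- used strictly before $T(w_i)$ --- was already the final exit from $w_{i+1}$. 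Hence the chosen edges form a forest with unique sink $u$, that is, an in-tree rooted at $u$.

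Next I would construct the inverse map. From an in-tree $T$ rooted at $u$ together with, for each vertex $v$, a linear ordering of its out-edges subject to the constraints that $T(v)$ comes last when $v\neq u$ and that $e$ comes first when $v=u$, I would reconstruct a trail greedily: start at $u$ along $e$, and on each arrival at a vertex leave it along the next unused out-edge in its prescribed order. Since $G$ is Eulerian it is balanced, $d^+(v)=d^-(v)$ for all $v$, so the trail can only halt at $u$, and at the moment it halts all out-edges of $u$, and hence all in-edges of $u$, have been used. The crucial point, which I expect to be the main obstacle, is to show that the trail in fact exhausts \emph{every} edge, so that it is genuinely Eulerian. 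For this I would argue by contradiction: if the set $S$ of vertices still possessing an unused out-edge were non-empty, then for any $v\in S$ its tree edge $T(v)$ (ordered last) would be unused, so the parent of $v$ would have an unused in-edge; since in- and out-usage balance at every vertex at the moment of halting, that parent would then also have an unused out-edge and thus lie in $S$. Climbing the tree forces $u\in S$, contradicting that all out-edges of $u$ were used. Therefore the trail is Eulerian, and checking that the two constructions are mutually inverse establishes the bijection.

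Finally, counting the free choices completes the proof. For each $v\neq u$ the tree edge is forced to be last, leaving $(d^+(v)-1)!$ orderings of the remaining out-edges, while at $u$ the edge $e$ is forced to be first, leaving $(d^+(u)-1)!$ orderings. Since every in-tree rooted at $u$ contributes the same number $\prod_{v\in V}(d^+(v)-1)!$ of admissible orderings, and there are exactly $\det(\Delta(u))$ such trees by the matrix tree theorem, we obtain
\[
\epsilon(G) = \det(\Delta(u)) \prod_{v\in V}(d^+(v)-1)!,
\]
as claimed.
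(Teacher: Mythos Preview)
Your argument is the standard last-exit bijection and is correct; the forward map to in-trees, the greedy reconstruction, and the count all go through as you describe. There is nothing to compare against, however: the paper does not prove the BEST theorem at all. It is stated as a classical result with a citation to van Aardenne-Ehrenfest and de Bruijn, and is then simply applied as a black box in the proof of \autoref{prop:classSize}. So you have supplied a valid proof where the paper gives none.
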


\subsection{The cardinality of an equivalence class}
We are now going to count $|[w]_k|$ for any word $w\in\words$ with
$w$ long enough. Let $w\in \Sigma(u,v)$, $u,v\in\Sigma^{k-1}$, and denote
by $f=\Psi_k(w)$. By \autoref{cor:EulerianCycleEquivalence}, we are
interested in the number of Eulerian paths of $G_f$. The difference from
$\epsilon(G_f)$ (in the case $u = v$) is that we consider two cycles to be
distinct if the \emph{vertices} are traversed in a different order.
Nonetheless, we can still use the BEST theorem to obtain this number.
Note that in $G_f$ we have $d^+(x) = |w|_x$ for all $x\neq v$ and
$d^+(v) = |w|_v-1$.

\begin{proposition}\label{prop:classSize}
Let $k\geq 1$ and $w\in \Sigma(u,v)$ for some $u,v\in\Sigma^{k-1}$. Then
\begin{equation*}
	|[w]_k| = \det(\Delta(v)) \prod_{x\in F_{k-1}(w)} \frac{(|w|_x-1)!}{\prod_{a\in\Sigma}|w|_{xa}!},
\end{equation*}
where $\Delta$ is the Laplacian of $G_{\Psi_k(w)}$.
\end{proposition}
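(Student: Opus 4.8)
The plan is to reduce the count $|[w]_k|$ to a single application of the BEST theorem (\autoref{thm:BEST}) on a suitably modified graph, while carefully tracking the difference between counting \emph{words} and counting \emph{edge-labeled} Eulerian trails. Throughout, write $f = \Psi_k(w)$, let $V = F_{k-1}(w)$ be the vertex set of $G_f$, and recall from \autoref{cor:EulerianCycleEquivalence} that $|[w]_k|$ equals the number of words $w'$ inducing an Eulerian path from $u$ to $v$ in $G_f$; two such paths are identified precisely when they spell the same word. Recall also $d^+(x) = |w|_x$ for $x \neq v$ and $d^+(v) = |w|_v - 1$.

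First I would set up the labeled-versus-unlabeled bookkeeping, which is the conceptual heart of the argument. The BEST theorem counts Eulerian circuits treating all edges---including parallel ones---as distinguishable, whereas a word $w' \sim_k w$ only records, at each vertex $x$ and for each letter $a$, how many of the $|w|_{xa}$ parallel $xa$-edges are used and at which visits, not which specific edge. Since an Eulerian trail uses each of the $|w|_{xa}$ parallel $xa$-edges exactly once, every word lifts to exactly $\prod_{x \in V}\prod_{a \in \Sigma}|w|_{xa}!$ edge-labeled Eulerian paths, obtained by independently assigning the parallel edges to the departures from $x$ via $a$. Hence, writing $E_{\mathrm{lab}}$ for the number of edge-labeled Eulerian paths from $u$ to $v$ in $G_f$, we have $|[w]_k| = E_{\mathrm{lab}}/\prod_{x}\prod_{a}|w|_{xa}!$.

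Next I would reduce $E_{\mathrm{lab}}$ to a circuit count. Adjoin a single new edge $e^*$ from $v$ to $u$, obtaining $G^*$. Since $G_f$ admits an Eulerian path from $u$ to $v$ (\autoref{lem:graphStructure}), the graph $G^*$ is connected and balanced, hence Eulerian; moreover $d^+_{G^*}(x) = |w|_x$ for \emph{every} $x$, because adjoining $e^*$ raises $d^+(v)$ from $|w|_v - 1$ to $|w|_v$ and leaves the other out-degrees untouched. Fixing $e^*$ as the distinguished starting edge, the edge-labeled Eulerian circuits of $G^*$ counted by $\epsilon(G^*)$ are exactly the edge-labeled Eulerian paths of $G_f$ from $u$ to $v$ (delete $e^*$); thus $E_{\mathrm{lab}} = \epsilon(G^*)$. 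Applying \autoref{thm:BEST} with the root chosen to be $v$ gives $\epsilon(G^*) = \det(\Delta^*(v)) \prod_{x \in V}(|w|_x - 1)!$, where $\Delta^*$ is the Laplacian of $G^*$.

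Finally I would identify the determinant. The edge $e^*$ alters the Laplacian only in row $v$: it adds $1$ to the $(v,v)$ entry and $-1$ to the $(v,u)$ entry, while in the degenerate case $u=v$ the loop changes nothing at all. Deleting row and column $v$ therefore erases every such change, so $\Delta^*(v) = \Delta(v)$, the Laplacian of $G_f$ appearing in the statement. Combining the three identities then yields the claimed formula $|[w]_k| = \det(\Delta(v)) \prod_{x \in F_{k-1}(w)} (|w|_x - 1)!/\prod_{a\in\Sigma}|w|_{xa}!$. I expect the main obstacle to be the first step: pinning down precisely the factor $\prod_{x}\prod_{a}|w|_{xa}!$ that converts the word count into the edge-labeled count, and ensuring that the reduction to $G^*$ together with the choice of root $v$ leaves no orientation or over-counting issue, in particular so that the case $u=v$ is handled uniformly.
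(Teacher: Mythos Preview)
Your argument is correct and follows essentially the same route as the paper: reduce to an Eulerian circuit count, apply the BEST theorem, and divide out the permutations of parallel edges. The only difference is organisational: the paper splits into the cases $u=v$ (where it works directly in $G_f$ and multiplies by $d^+(v)$ to convert from circuits-up-to-rotation to circuits-from-a-fixed-vertex) and $u\neq v$ (where it adjoins the edge $(v,u)$ and also must divide by $(m_H(v,u)-1)!$ rather than $m_H(v,u)!$ because the starting edge is fixed), whereas you treat both cases at once by always adjoining $e^*$ and observing that a loop leaves the Laplacian unchanged. Your uniform handling is slightly cleaner; the paper's split makes the bookkeeping for the parallel-edge factor in the $u\neq v$ case more visible but is otherwise equivalent.
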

\begin{proof}
Let $f = \Psi_k(w)$ and $V = F_{k-1}(w)$. Suppose first that $u = v$,
so that $G_f$ contains an Eulerian cycle. We shall first count the
number of distinct Eulerian cycles starting from \emph{vertex} $v$.
Note here that two cycles are considered distinct if the \emph{edges}
are traversed in a different order.

It follows from the BEST theorem, that the number of Eulerian cycles
starting from vertex $v$ equals
\begin{equation}\label{eq:cyclesFormula}
	d^+(v) \det(\Delta(v)) \prod_{x\in V}(d^+(x)-1)! = \det(\Delta(v)) \prod_{x\in V}(|w|_x-1)!.
\end{equation}
Now two Eulerian cycles are induced by the same word $z$ if
and only if the \emph{vertices} are traversed in the same
order. The claim follows by dividing the right hand side of
equation \eqref{eq:cyclesFormula} by the number of different
ways to order the individual edges between two vertices $x$
and $y$ for all $x,y\in V$:
\begin{equation*}
	\prod_{(x,y)\in E}m(x,y)! = \prod_{x\in V}\prod_{a\in\Sigma} f[xa]!.
\end{equation*}

Suppose then that $u\neq v$. We shall now add to $G_f$ a new
edge $e = (v,u)$ to obtain $H$, an Eulerian graph. Observe that
$d_{H}^+(v) = d_{G_f}^+(v) + 1 = |w_v|$, the rest of the
out-degrees remain the same. Furthermore, the number of Eulerian
paths from $u$ to $v$ in $G_f$ equals the number of Eulerian
cycles beginning with $e$ in $H$. We again invoke the BEST
theorem: the number of Eulerian cycles beginning from edge $e$ is
\begin{align}
\nonumber \det(\Delta(v)) \prod_{x\in V}(d_{H}^+(x)-1)!
		&= \det(\Delta(v))\ d_{G_f}^+(v)!\prod_{\substack{x\in V\\x\neq v}}(d_{G_f}^+(x)-1)!\\
        &= \det(\Delta(v)) \prod_{x\in V}(|w|_x-1)!,\label{eq:cyclesFormula2}
\end{align}
where $\Delta$ can be chosen to be the Laplacian of either $H$ or $G_f$,
since the Laplacians of $G_f$ and $H$ differ only in the row and column
corresponding to $v$.

Similar to the previous case, we are not interested in which order
the edges from $x$ to $y$ are traversed, with one exception: we
have fixed the starting edge $e$. The right hand side of equation
\eqref{eq:cyclesFormula2} should thus be divided by
\begin{equation*}
(m_{H}(v,u)-1)!\prod_{\substack{(x,y)\in E\\(x,y)\neq (v,u)}}m_{H}(x,y)!
        = \prod_{(x,y) \in E}m_{G_f}(x,y)!
            = \prod_{x\in V}\prod_{a\in\Sigma}f[xa]!.
\end{equation*}
The claim follows.
\end{proof}

\begin{example}Let $w = ababaaaa$ and $f = \Psi_2(w)$. We have
\begin{equation*}f = (|w|_{aa},|w|_{ab},|w|_{ba},|w|_{bb}) = (3,2,2,0).\end{equation*}
The Laplacian of $G_f$ is
$\left(\begin{smallmatrix}
 2 & -2 \\ -2 & 2
\end{smallmatrix}\right)$, from which we obtain $\det(\Delta(a)) = 2$. The above proposition then gives us:
\begin{equation*}|[w]_2| = \det(\Delta(a))\cdot \frac{(|w|_a-1)!(|w|_b-1)!}{|w|_{aa}!|w|_{ab}!|w|_{ba}!}
        = 2\cdot \frac{5!\cdot 1!}{3!\cdot 2!\cdot 2!}
				= \binom{5}{2}.\end{equation*}
One should compare this to the proof of \autoref{claim:example}, 2).%\xqed
\end{example}

\section{On the structure of singleton classes}\label{sec:singletons}
In this section we are interested in the structure of $k$-abelian singleton
classes, i.e., $k$-abelian classes containing exactly one element. There
always exist $k$-abelian singletons for each length $n$, consider for
example $a^n$.

\begin{example}
It is not difficult to verify that the set of $2$-abelian singletons over
$\{a,b\}$ beginning with $a$ is
$a^+b^* \bigcup ab^*a \bigcup (ab)^*\{\eps, a\}$. As the number of
singleton classes beginning with $b$ are the same up to switching $a$'s
with $b$'s, the total number of $2$-abelian singleton classes of length
$n$ over a binary alphabet is $2n+4$ for $n\geq 4$.%\xqed
\end{example}

\subsection{A factorization of \texorpdfstring{$k$}{k}-abelian singletons}
We first characterize $k$-abelian singletons in terms of generalized
return words using $k$-switchings. For this we say that $x$ is a
\emph{proper factor} of $w$ if $x$ occurs in $w[2,|w|)$.

\begin{definition}
Let $u\in\words$ and let $x,y\in\Sigma^+$ be of the same length.
A \emph{return from $x$ to $y$ in $u$} is a word $v\in \Sigma^+$
such that $vy$ is a factor of $u$, $x$ is a prefix of $vy$ and
neither $x$ or $y$ occurs as a proper factor of $vy$.
If $x=y$ then we simply say $v$ is a return to $x$ in $w$.
\end{definition}
Note that if $v$ and $v'$, $|v|\leq |v'|$, are distinct returns
from $x$ to $y$ in a word $w$, then $vy$ cannot be a factor of
$v'y$, as otherwise $v'y$ would contain either $x$ or $y$ as a
proper factor. On the other hand, $v$ could be a proper prefix
of $v'$, consider for example $x = a$, $y=b$, $vy = acb$, $v'y = accb$.
\begin{proposition}\label{prop:characterizationReturnwords}
	A word $w\in\words$ is a $k$-abelian singleton if and only if
	for each pair $x,y\in F_{k-1}(w)$ there is at most one return
	from $x$ to $y$ in $w$.
\end{proposition}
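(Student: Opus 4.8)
The plan is to prove both directions via the $k$-switching characterization from \autoref{prop:rewritingCharacterization}, exploiting the fact that a $k$-abelian singleton is precisely a word on which no $k$-switching produces a distinct word.

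\medskip

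\noindent\textbf{Proof proposal.} The key observation is that by \autoref{prop:rewritingCharacterization}, $w$ is a $k$-abelian singleton if and only if every $k$-switching $S_{w,k}(i,j,l,m)$ satisfies $S_{w,k}(i,j,l,m) = w$, i.e.\ no $k$-switching is nontrivial. So the task reduces to translating the combinatorial condition ``at most one return from $x$ to $y$ for each pair $x,y \in F_{k-1}(w)$'' into ``every $k$-switching is trivial.''

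\medskip

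First I would prove the contrapositive of the ``if'' direction: assume there exist $x,y\in F_{k-1}(w)$ with two distinct returns $v$ and $v'$ from $x$ to $y$ in $w$, and produce a nontrivial $k$-switching, hence (by \autoref{lem:switchingImpliesEquivalence}) a word $w'\sim_k w$ with $w'\neq w$, showing $w$ is not a singleton. Concretely, let $vy$ occur starting at some index $i$ (so $w[i,i+k-1)=x$ and $y$ ends the occurrence at the matching position, giving an index $j$ with $w[j,j+k-1)=y$), and let $v'y$ occur starting at a later index, supplying indices $l,m$ with $w[l,l+k-1)=x$ and $w[m,m+k-1)=y$ and $i<j\le l<m$. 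The crucial point is that since $v\neq v'$ and neither contains $x$ nor $y$ as a proper factor, the factors $u[i,j)=v$ and $u[l,m)=v'$ are genuinely different blocks, so swapping them via $S_{w,k}(i,j,l,m)$ changes the word. I expect the main obstacle to be the bookkeeping here: one must verify the index inequalities $i<j\le l<m$ hold and set up the occurrences so that the switching is applicable and \emph{nontrivial}, carefully handling the overlap cases (in particular $x=y$, and returns where $v$ is a proper prefix of $v'$, as in the remark $vy=acb$, $v'y=accb$). One should choose the occurrences of the two returns to be, say, consecutive in $w$ so that the blocks being exchanged are exactly $v$ and $v'$ and differ.

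\medskip

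For the converse (``only if''), I would again argue contrapositively: suppose $w$ is not a singleton, so there is a nontrivial $k$-switching $S_{w,k}(i,j,l,m)=w'\neq w$. From the definition \eqref{eq:k-switching}, the switching exchanges the blocks $w[i,j)$ and $w[l,m)$, and since $w'\neq w$ these two blocks must differ. By the defining conditions of a $k$-switching, $w[i,i+k-1)=w[l,l+k-1)=x$ and $w[j,j+k-1)=w[m,m+k-1)=y$ for some $x,y\in F_{k-1}(w)$. The plan is to extract from these two blocks two distinct returns from $x$ to $y$. This is not completely immediate, because $w[i,j)$ need not itself be a return (it may contain intermediate occurrences of $x$ or $y$ as proper factors). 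The fix is to pass to a minimal sub-configuration: within $w[i,j)\cdot y$ take the \emph{first} return from $x$ to $y$, and similarly within $w[l,m)\cdot y$; then argue that the difference between the two blocks forces at least one pair of corresponding returns to be distinct. I expect this minimality extraction to be the second main technical hurdle, and it mirrors exactly the minimal-occurrence selection used in the proof of \autoref{claim:technicalSwitching}, so I would reuse that style of argument. Once both directions are established, the proposition follows.
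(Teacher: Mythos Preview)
Your two directions are labelled the wrong way round (what you call the ``if'' direction is the paper's ``only if'' and vice versa), but this is cosmetic. The direction ``two distinct returns $\Rightarrow$ not a singleton'' is handled exactly as in the paper, and your bookkeeping worries about $i<j\le l<m$ are resolved just as the paper does: an occurrence of $x$ strictly between positions $i$ and $j$ would make $x$ a proper factor of $vy$, contradicting that $v$ is a return.

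The other direction is where your plan diverges from the paper, and there is a genuine gap in the extraction step. You propose to start from a nontrivial switching $S_{w,k}(i,j,l,m)$ and pull two distinct returns from $x$ to $y$ out of the blocks $w[i,j)$ and $w[l,m)$ by comparing ``corresponding'' first returns. But this can fail: take $k=3$, $x=y=00$, $r=001$, $s=0011$, and $w = r\, s\, r\, r\, x = 001001100100100$. With $i=1$, $j=4$, $l=8$, $m=14$ the switching is nontrivial, yet $w[i,j)=r$ and $w[l,m)=r^2$ decompose using only the single return $r$; the second return $s$ sits in the middle piece $w[j,l)$, which your plan ignores. So ``the difference between the two blocks forces at least one pair of corresponding returns to be distinct'' is false as stated.

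The paper avoids this by not going through \autoref{prop:rewritingCharacterization} at all for this direction: it applies \autoref{claim:technicalSwitching} directly to $w$ and an arbitrary $w'\sim_k w$, $w'\neq w$, obtaining positions $i<j\le l<m$ with $w[i,i+k)=xa$, $w[l,l+k)=xb$, $a\neq b$, and $y$ at positions $j$ and $m$; from this configuration a short case analysis produces two distinct returns (sometimes from $x$ to $y$, sometimes from $x$ to $x$). Your detour through a nontrivial switching is not wrong in spirit, but to repair the extraction you would have to look at the whole segment $w[i,m+k-1)$ rather than just the swapped blocks, and you would essentially reprove \autoref{claim:technicalSwitching} along the way --- so you may as well invoke it directly, as the paper does.
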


\begin{proof}
	We first prove the "only if" part.
	Suppose that $w$ contains two distinct returns $v$ and $v'$
	from $x$ to $y$, $x,y\in\Sigma^{k-1}$. We will show that $w$ is
	not a $k$-abelian singleton.
	
	Let $vy = w[i,j)y$ and $v'y = w[l,m)y$ with $i<l$. Note that $j < m$
	as otherwise $vy$ contains $v'y$ as a factor.
	In fact, by definition, we necessarily have $i<j$ and $l<m$
	(since $v,v'\in\Sigma^+$) and $j\leq l$, (since otherwise $vy$
	contains $x$ as a proper factor).
	Now we can perform a switching $w' = S_{w,k}(i,j,l,m)$ so that
	\begin{equation*}w' = w[1,i)w[l,m)w[j,l)w[i,j)w[m..]\end{equation*}
	Note that $w'\neq w$, since $w$ begins with $w[1,i)vy$ and $w'$ with
	$w[1,i)v'y$. We conclude that $w$ is not a $k$-abelian singleton.
	
	Now we prove the "if" part.
	Suppose that for each pair $x,y\in F_{k-1}(w)$ there is
	at most one return from $x$ to $y$ in $w$. We claim that $w$ is
	a $k$-abelian singleton. Suppose, for the sake of contradiction,
	that $w'\sim_k w$ with $w'\neq w$.
	By applying \autoref{claim:technicalSwitching} to $w$ and $w'$
	we obtain $x,y\in\Sigma^{k-1}$, $a,b\in\Sigma$, $a\neq b$, and
	indices $i,j,l,m$, $i<j\leq l < m$, such that $w[i,i+k) = xa$,
	$w[l,l+k) = xb$, and $w[j,j+k-1) = w[m,m+k-1) = y$.
	At this point we can assume that $j$ and $m$ are minimal among such
	indices.
	
	We first observe that there exists a return to $x$ in $w$ which begins
	with $xa$. If now $x$ occurred in $w[l+1..]$, we would have another
	return to $x$ in $w$ which begins with $xb$, a contradiction. It follows
	that $w[l,m)y$ is a return from $x$ to $y$ in $w$ (beginning with $xb$).
	Thus there exists an occurrence of $xb$ at position $p$, $i<p<j$, otherwise
	we would have another return from $x$ to $y$ in $w$ (starting with $xa$).
	Now $w[p,j)y$ is a return from $x$ to $y$ in $w$, hence it begins with $xb$.
	But this is a contradiction, since there is return to $x$ in $w$ which
	begins with $xb$ and, as we noticed above, there is  also a return to $x$
	in $w$ beginning with $xa$. We conclude that $w$ is a $k$-abelian
	singleton.
\end{proof}

We are now going to describe the structure of $k$-abelian
singletons. For this we need some technical lemmas and
notation.

\begin{lemma} \label{lemma_power}
Let $u$ be a $k$-abelian singleton, and let $x\in\Sigma^{k-1}$ be a
factor of $u$ occurring at least three times. Then $u = u[1,i) v^l
x u[m+k-1..]$, where $v$ is the unique return to $x$, $l \geq 2$
is an integer, and $i$ and $m$ are the positions of the first and
the last occurrences of $x$ in $u$, respectively.
\end{lemma}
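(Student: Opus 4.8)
The plan is to reduce the statement to the singleton characterization of \autoref{prop:characterizationReturnwords}, which asserts that in a $k$-abelian singleton there is, for every pair of length-$(k-1)$ factors, at most one return between them; applied with $y=x$ this gives at most one return to $x$ in $u$. First I would enumerate the occurrences of $x$ in $u$ as $p_1<p_2<\cdots<p_r$, where $r\geq 3$ by hypothesis, so that $p_1=i$ is the first and $p_r=m$ the last occurrence. For each consecutive pair I set $v_s=u[p_s,p_{s+1})$ (a non-empty word, as $p_{s+1}>p_s$) and claim that each $v_s$ is a return to $x$ in $u$. The word $v_s x=u[p_s,p_{s+1}+k-1)$ is a factor of $u$ of which $x=u[p_s,p_s+k-1)$ is a prefix, and I must check that $x$ does not occur as a proper factor of $v_s x$.

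Granting the claim, \autoref{prop:characterizationReturnwords} forces all the $v_s$ to coincide with the unique return $v$, so $u[i,m)=v_1v_2\cdots v_{r-1}=v^{r-1}$, and hence $u[i,m+k-1)=v^{r-1}x$ after appending the last occurrence $u[m,m+k-1)=x$. Splitting off the prefix $u[1,i)$ lying before the first occurrence of $x$ and the suffix $u[m+k-1..]$ lying after the last one then yields $u=u[1,i)\,v^{r-1}x\,u[m+k-1..]$. Setting $l=r-1\geq 2$, which is justified exactly because $x$ occurs at least three times, gives the asserted factorization, with the integer $l$ and the positions $i,m$ as required.

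The one step needing genuine care is the verification that $v_s$ is a return in the precise sense of the definition, namely that $x$ has no proper occurrence in $v_s x$. Recall that a proper factor occurs within $w[2,|w|)$, so an occurrence of $x$ in $v_s x$ is proper exactly when its starting position $q$ satisfies $2\leq q\leq |v_s|$; translating back, such an occurrence would place an occurrence of $x$ in $u$ strictly between $p_s$ and $p_{s+1}$, which is impossible since these are consecutive occurrences. This bound also correctly excludes the two boundary occurrences of $x$ (the prefix at $p_s$ and the suffix ending at $p_{s+1}+k-2$), a point that matters precisely when the occurrences of $x$ overlap. Once this verification is in place, the remainder is just the uniqueness input from \autoref{prop:characterizationReturnwords} together with routine index bookkeeping.
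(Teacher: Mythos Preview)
Your proof is correct and follows essentially the same approach as the paper: enumerate all occurrences of $x$, observe that each block between consecutive occurrences is a return to $x$, and invoke \autoref{prop:characterizationReturnwords} to conclude that all these returns coincide with a unique $v$. Your treatment is in fact more careful than the paper's, which simply asserts that the words $u[i_j,i_{j+1})$ are returns to $x$ without spelling out the verification that no proper occurrence of $x$ lies inside $v_s x$.
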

\begin{proof} Let $i_0=i, i_1, \dots, i_l=m$, $l\geq 2$, be the
sequence of all positions of $x$ in $u$. Then the words $u[i_j,i_{j+1})$,
$j=0, \ldots, l-1$, are returns to $x$ in $u$. By
\autoref{prop:characterizationReturnwords},
$x$ has exactly one return word in $u$, and this return word is $v$.
It follows that $u[i_j,i_{j+1}) = v$ for all $j=0, \dots, l-1$.
%Since $x$ occurs at least three times in $u$, we let $j$ denote the leftmost occurrence of $x$ with $i<j<m$.
%It follows that $u =
%S_{k,u}(i,j,j,m)$, that is,
%$$u{[1,i)}u{[i,j)}u{[j,m)}u{[m..]} = u{[1,i)}u{[j,m)}u{[i,j)}u{[m..]},$$
%or equivalently, $u{[i,j)}u{[j,m)}=u{[j,m)}u{[i,j)}$. By a
%classical fact from combinatorics on words, it follows that there
%exists a primitive word $v\in \Sigma^+$ such that $u{[i,j)},
%u{[j,m)} \in v^+$ (see, e.g., \cite{Lot97}). Moreover, from
%\autoref{prop:characterizationReturnwords}, $x$ has exactly one
%return word in $u$, and this return word is $v$.
%(anon-empty word $v$ is called a return word to $x$ in $u$ if $vx$
%is a factor of $u$, $x$ is a prefix of $vx$, and there are no other
%occurrences of $x$ in $vx$ apart from the prefix and suffix)
%From this it again follows from a classical fact from combinatorics
%of words, that $x = v^p$ for some rational number $p$.
We thus have $u = u[1,i) v^l x u[m+k-1..]$, where %$l \geq 2$ is an integer and
$v^lx$ contains all occurrences of $x$ in $u$. \end{proof}

We say that a non-empty word $w$ is \emph{$k$-full} if
$|F_{k-1}(w^{\circ})| = |w|$. In other words, $w$ is $k$-full if
$w^{\omega}$ contains $|w|$ distinct words of length $k-1$. Further, we
define a \emph{$k$-full repetition} as a repetition of a $k$-full word,
which contains some factor of length $k-1$ at least 3 times. Clearly,
\begin{itemize}
\item a $k$-full word is primitive,
\item each factor of length $k-1$ in $w^{\omega}$, with $w$ $k$-full, has a unique
return word,
\item the repetition $v^l x$ in \autoref{lemma_power} is $k$-full,
\item in a $k$-abelian singleton, any repetition $r^q$, with
\begin{equation}\label{eq:k-fullRepLength}
	q \geq \tfrac{k-1}{|r|}+2,
\end{equation}
has to be a $k$-full repetition. Indeed, property \eqref{eq:k-fullRepLength}
ensures that the repetition $r^q$ contains at least one factor of length $k-1$ at
least three times.
\end{itemize}

Let $u = a_1a_2\cdots a_n$ and suppose $u[i,m) = r^q$ for some primitive
$r\in\words$ and $q \geq 2$. Then the repetition $r^q$ is called a \emph{run} if
both $a_{i-1}\neq a_{i-1+|r|}$ (or $i=1$), and $a_m\neq a_{m-|r|}$ (or $m-1=n$).
In other words, a run in a word $u$ is a maximal (or non-extendable) repetition
in $u$.

Note that each repetition in a word can be extended to a run (of the same
period) by adding a prefix and a suffix if necessary. So, in the
expression $u = u[1,i) v^l x u[m+k-1..]$ we can now extend the
repetition $v^l x$ to a run:

\begin{corollary}\label{cor_power}
Let $u$ be a $k$-abelian singleton, and let $x\in\Sigma^{k-1}$ be a
factor of $u$ occurring at least three times. Then $u$ is of form
$u = t r^{q} t'$ where $r^{q}$ is a $k$-full run containing $x$,
and any word in $F_{k-1}(r^{\circ})$ occurs only in the run and
nowhere else in $u$.
\end{corollary}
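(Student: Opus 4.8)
The plan is to obtain the corollary from \autoref{lemma_power} together with the bulleted observations immediately preceding the statement, the only real work being the isolation claim at the end. First I would apply \autoref{lemma_power} to write $u = u[1,i)\,v^l x\,u[m+k-1..]$, where $v$ is the unique return to $x$ and $l\geq 2$. Since $v^l x$ is a $k$-full repetition (third bullet), it is a repetition $r^{q_0}$, $q_0\geq 2$, of a $k$-full (hence primitive) word $r$. Extending $v^l x$ to a maximal repetition of the same period, as in the remark on runs, yields $u = t r^{q} t'$ for suitable $t,t'\in\words$. Because the period $|r|$ is unchanged, $r^{\circ}$ is the same $k$-full necklace, and as $r^{q}\supseteq v^l x$ still contains $x$ at least three times, $r^{q}$ is a $k$-full run containing $x$, as asserted.

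It then remains to show that every $y\in F_{k-1}(r^{\circ})=F_{k-1}(r^{q})$ occurs in $u$ only inside the run. For $y=x$ this is precisely \autoref{lemma_power}, so I would assume $y\neq x$ and, for contradiction, that $y$ has an occurrence in $u$ outside $r^{q}$. Since $r$ is $k$-full, $y$ appears exactly once per period of $r^{\circ}$; and as the run spans at least two full periods (it contains $x$ at least three times, at positions spaced by $|v|\geq|r|$), the factor $y$ occurs at least twice inside the run. Combined with the external occurrence, $y$ occurs at least three times in $u$, so \autoref{lemma_power} applies to $y$ as well: all occurrences of $y$ in $u$ form a single repetition $v_y^{\,l_y} y$, where $v_y$ is the unique return to $y$ (uniqueness coming from \autoref{prop:characterizationReturnwords}, as $u$ is a singleton).

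The contradiction is obtained by identifying $v_y$ with the period $r$. Inside the run, two consecutive occurrences of $y$ are spaced exactly $|r|$ apart, and the factor read between them is a cyclic conjugate of $r$; this conjugate is a return to $y$, so by uniqueness it equals $v_y$, whence $|v_y|=|r|$. Therefore $v_y^{\,l_y} y$ is a repetition of period $|r|$ that overlaps the maximal run $r^{q}$ (they share the internal occurrences of $y$); two overlapping repetitions of the same period lie in a common run, so $v_y^{\,l_y} y\subseteq r^{q}$. But $v_y^{\,l_y} y$ contains the external occurrence of $y$, contradicting that it lies outside $r^{q}$. Hence no factor of $F_{k-1}(r^{\circ})$ occurs outside the run, which finishes the proof. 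I expect this last step to be the main obstacle: pinning down $v_y$ as a conjugate of $r$ from the uniqueness of returns and then invoking maximality of the run $r^{q}$ to force the containment, with the auxiliary count that $y$ occurs at least twice inside the run requiring the fact that $r^{q}$ covers at least two periods.
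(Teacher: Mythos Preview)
Your argument is correct. The construction of the $k$-full run from \autoref{lemma_power} and the bulleted remarks matches the paper exactly. For the isolation claim, however, the paper takes a shorter route than you do: once one knows that each $y\in F_{k-1}(r^{\circ})$ occurs at least twice inside the run, an external occurrence of $y$ immediately yields a second, distinct return to $y$ (the return bridging the boundary of the run cannot equal the internal conjugate of $r$, since the run is maximal), and this directly contradicts \autoref{prop:characterizationReturnwords}. Your detour---reapplying \autoref{lemma_power} to $y$, identifying $v_y$ as a conjugate of $r$, and then using that two repetitions of the same period overlapping in at least one full period must merge---arrives at the same contradiction, but packages the ``maximality forces a different return'' observation inside a period-extension argument rather than stating it outright. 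Both approaches are valid; the paper's is simply more economical, while yours makes the role of the period $|r|$ more explicit. One small point worth tightening in your write-up: when you invoke ``two overlapping repetitions of the same period lie in a common run'', you should note that the overlap has length at least $|r|$ (it contains two internal occurrences of $y$, spanning $|r|+k-1$ letters), since that is what guarantees the merged factor retains period $|r|$.
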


% Now any word in
%$F_{k-1}(v^{\circ})$ occurs only in the factor $r^{q'}$ and
%nowhere else in $u$.
\begin{proof}
First notice that the run $r^{q}$ is an extension of the repetition
$v^l x$ defined by \autoref{lemma_power}, and in particular $q > l$.
Clearly, $r^{q}$ contains all occurrences of $x$, since it contains $v^lx$,
which, in turn, contains all occurrences of $x$. It is not hard to see
that the same is true for all other factors of length $k-1$ of $r^{q}$.
Indeed, each factor of length $k-1$ occurs in it at least twice. So, if
such a factor occurred somewhere else in $u$ (outside the run), then it
would have at least two returns, which contradicts
\autoref{prop:characterizationReturnwords}.
\end{proof}

%\begin{example} We illustrate Lemma \ref{lemma_power} and Corollary \ref{cor_power} by the following example: Take $u=00010010010001111$ and $x=10$, then we have
%$v=100$, $l=2$%and $q=2 \tfrac{2}{3}$
%, so that $u = 000\hskip5pt
%(100)^2 10\hskip5pt 001111$. Extending the repetition $(100)^2 10$
%to $00(100)^2 100= (001)^{11/3}$, we get $u = 0\hskip5pt
%(001)^{11/3} \hskip5pt01111$. Here $r=001$, $q=11/3$, $t=0$, and
%$t'=01111$. \end{example}

\begin{example} We illustrate Lemma \ref{lemma_power} and Corollary \ref{cor_power} by the following example: Take $u=0010101010001111$, $k=4$ and $x=101$, then we have
$v=10$, $l=2$%and $q=2 \tfrac{2}{3}$
, so that $u = 00\hskip5pt (10)^2 101\hskip5pt 0001111$. Extending
the repetition $(10)^2 101$ to $0(10)^2 1010= (01)^{9/2}$, we get
$u = 0\hskip5pt (01)^{9/2} \hskip5pt001111$. Here $r=01$, $q=9/2$,
$t=0$, and $t'=001111$.%\xqed
\end{example}

So, in a singleton class, each factor $x$ of length $k-1$ occurring
at least three times, occurs in some run $r^q$ and nowhere else.
Between two different runs $r_1^{q_1}$ and $r_2^{q_2}$ there could
be a word $t$: $r_1^{q_1} [t] r_2^{q_2}$
($t$ might be $\varepsilon$), or they might overlap by a word $t'$
of length at most $k-2$ (because of the condition on returns);
we will denote this as $r_1^{q_1} [t']^{-1} r_2^{q_2}$.
For example, for $k=4$, $u=0110110110010010010$ we write
$u=(011)^{10/3} [10]^{-1} (100)^{11/3}$.

%We can repeat these considerations to all factors occurring at
%least three times to obtain the following representation.

%(In the following we say that $u$ is a \emph{proper factor} of $w$
%if $u$ occurs in $w[2,|w|)$.)
%\begin{align}\label{eq:k-abelianSingularFactorization}
%    u=t_0 \cdot r_1^{q_1}r_1' \cdot t_1 \cdot r_2^{q_2}r_2' \cdot t_2\cdots r_s^{q_s}r_s' \cdot t_s,
%\end{align}

The following proposition gives a structure of the $k$-abelian
singleton classes:

\begin{proposition}\label{prop:singletonstructure}Let $u$ be a $k$-abelian singleton. Then $u$ is of the form
\begin{align}\label{eq:k-abelianSingularFactorization}
    u=t_0 \cdot r_1^{q_1} \cdot [t_1]^{\sigma_1} \cdot r_2^{q_2} \cdot [t_2]^{\sigma_{2}}\cdots r_s^{q_s} \cdot t_s,
\end{align}
where $\sigma_i$ is either $-1$ or $+1$, $t_i\in\words$, $r_i$ is
$k$-full run, $q_i \geq 2 + \frac{k-1}{|r_i|}$ is rational for all $i = 1,\ldots,s$,
and for $i\neq j$ we have $F_{k-1}(r_i^{\circ}) \cap
F_{k-1}(r_j^{\circ}) = \emptyset$. Furthermore, if $\sigma_i = +1$
(resp., $-1$), then any factor of length $k-1$ overlapping $t_i$
(resp., containing $t_i$ as a proper factor) occurs at most twice in $u$.
\end{proposition}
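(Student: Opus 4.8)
The plan is to build the factorization in (\ref{eq:k-abelianSingularFactorization}) by isolating, one at a time, the maximal factors of length $k-1$ that occur at least three times, and showing that each such occurrence is ``trapped'' inside a $k$-full run. The backbone of the argument is \autoref{cor_power}: it guarantees that whenever a factor $x\in\Sigma^{k-1}$ occurs at least three times in the singleton $u$, there is a $k$-full run $r^{q}$ containing $x$ such that \emph{every} word of $F_{k-1}(r^{\circ})$ occurs only inside that run. I would first establish that distinct ``heavily-occurring'' factors give rise either to the same run or to runs with disjoint factor sets. Concretely, suppose $x,x'\in\Sigma^{k-1}$ each occur at least three times, producing runs $r^{q}$ and $(r')^{q'}$ via \autoref{cor_power}. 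If these runs share any factor $y\in F_{k-1}(r^{\circ})\cap F_{k-1}((r')^{\circ})$, then $y$ occurs at least three times and lies inside both runs by the ``nowhere else'' clause of \autoref{cor_power}; forcing their overlap to contain a full period shows $r^{\circ}=(r')^{\circ}$ and the runs coincide. This yields a well-defined finite collection of runs $r_1^{q_1},\ldots,r_s^{q_s}$ with pairwise disjoint $F_{k-1}(r_i^{\circ})$, which is exactly the disjointness condition in the statement.

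With the runs identified, the next step is to order them by their position in $u$ and read off the factorization. Listing the runs left to right as $r_1^{q_1},\ldots,r_s^{q_s}$, I would let $t_0$ be the prefix of $u$ before $r_1^{q_1}$, let $t_s$ be the suffix after $r_s^{q_s}$, and examine consecutive runs $r_i^{q_i}$ and $r_{i+1}^{q_{i+1}}$. Because factors of $F_{k-1}$ that recur three or more times are confined to their own run, two consecutive runs can only interact through short material: either they are separated by a gap $t_i$ (set $\sigma_i=+1$), or they overlap in a word $t_i$ of length at most $k-2$ (set $\sigma_i=-1$), precisely the notation $[t_i]^{\sigma_i}$ introduced before the proposition. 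The bound $q_i\geq 2+\tfrac{k-1}{|r_i|}$ is then simply the length condition (\ref{eq:k-fullRepLength}) guaranteeing that $r_i^{q_i}$ is a genuine $k$-full run, i.e.\ that it contains some length-$(k-1)$ factor at least three times; this is what makes $r_i$ eligible as a run in the first place and is inherited directly from \autoref{cor_power}.

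The final clause, about factors of length $k-1$ overlapping or containing the connective $t_i$, is the claim I expect to require the most care. The idea is that any length-$(k-1)$ factor $z$ of $u$ straddling the boundary region $[t_i]^{\sigma_i}$ cannot lie entirely within any single run (else it would be absorbed into that run's factor set), so by the disjointness and ``nowhere else'' properties it is \emph{not} one of the thrice-occurring factors; by \autoref{cor_power} (contrapositive) such a $z$ occurs at most twice in $u$. The two cases $\sigma_i=+1$ and $\sigma_i=-1$ differ only in what ``overlapping $t_i$'' versus ``containing $t_i$ as a proper factor'' means positionally, and I would verify each by a short index computation using the definition of run (non-extendability at both ends) to rule out a third occurrence. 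The main obstacle is bookkeeping: ensuring the runs are genuinely maximal and non-overlapping beyond the permitted $k-2$ symbols, so that the decomposition is unambiguous and every occurrence of $u$'s letters is accounted for exactly once. Once the ``at-most-two-occurrences'' boundary property is pinned down, the factorization follows by concatenating the pieces in order.
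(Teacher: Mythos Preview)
Your proposal is correct and follows essentially the same approach as the paper: the paper's own proof is a two-sentence sketch stating that one applies \autoref{cor_power} to all factors occurring at least three times to obtain the runs $r_i^{q_i}$, with the $t_i$ arising as the joints between runs. Your write-up simply fills in the bookkeeping (disjointness of the $F_{k-1}(r_i^{\circ})$, the overlap bound $k-2$, and the ``at most twice'' clause for boundary factors) that the paper leaves implicit.
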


\begin{proof} The proof is in fact the application of \autoref{cor_power} to
all factors occurring at least three times in $u$; these factors give
rise to the $k$-full runs $r_i^{q_i}$. The words $t_i$ come from
joints of runs in a word.
\end{proof}

So, in fact we have two types of factors of length $k-1$:
those which occur in some $r_i^{q_i}$ and can occur more than twice,
and those which overlap $t_i$ for $\sigma_i=+1$ and contain $t_i$
as a proper factor for $\sigma_i=-1$; the latter factors can occur at most twice.

\begin{example}
The word $u = 0^{10} [00]^{(-1)} (0011)^{7/2} [\eps]^{} (01)^{13/2}$ is a
$5$-abelian singleton. The factor $0000$ occurs only in $r_1^{q_1}= 0^{10}$,
the factors $0011$, $0110$, $1100$, $1001$ occur only in
$r_2^{q_2}=(0011)^{7/2}$, and $0101$, $1010$ occur only in
$r_3^{q_3}=(01)^{13/2}$. The factor $0001$ occurs twice, once in the
intersection with $t_1 = [00]^{(-1)}$ and once as an overlap with
$t_2 = [\eps]$. The factor $1000$ occurs once in the overlap with $t_2$.
It is not hard to see that no switching is possible, so the class is
indeed a singleton.%\xqed
\end{example}

\begin{remark}\label{rem:singletonStructure}
Let $u$ be a word having a representation \eqref{eq:k-abelianSingularFactorization}.
\begin{itemize}
\item If each word of length $k-1$ overlapping some $t_i$ occurs exactly
once in $u$, then $u$ is a $k$-abelian singleton.
\item Some words overlapping $t_i$ with $\sigma_i = +1$
(resp., containing $t_i$ with $\sigma_i = -1$ as a proper factor)
can occur twice. In this case $u$ could be a singleton (if no
switchings are possible) or not (if a switching is possible).
\end{itemize}
\end{remark}

\subsection{The type of a singleton}
We shall fix some notions which we will use further on. Given a singleton
with a representation \eqref{eq:k-abelianSingularFactorization}, we say
that the tuple
\begin{equation}\label{eq:type}
	(\{r_i\}_{i=1}^{s}, \{\langle q_i\rangle\}_{i=1}^s, \{t_i\}_{i=0}^{s}, \{\sigma_i\}_{i=1}^{s-1})
\end{equation}
defines the \emph{type} of the singleton (here $\langle q\rangle $ denotes the
fractional part of a rational number $q$: $\langle q \rangle = q - \lfloor q \rfloor$).
When the type of the singleton is defined, only the integer parts $\lfloor q_i \rfloor$ of
the powers $q_i$ may change. Note here that if one choice of the numbers
$\lfloor q_i \rfloor$ defines a singleton, then so will all other choices, as long as
\eqref{eq:k-fullRepLength} is satisfied for all the runs.

The following lemma says that given $k$ and $\Sigma$, the
number of types of $k$-abelian singletons of length $n$ is bounded
by a constant which does not depend on $n$ (but depends, of course, on $k$):

\begin{lemma}\label{lem:boundedTypes}
Given $k$ and $\Sigma$, the number of types of $k$-abelian
singletons of length $n$ is $\Theta(1)$.
\end{lemma}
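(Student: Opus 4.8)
The plan is to show that each of the four components of the type in \eqref{eq:type} ranges over a set whose size is bounded by a constant depending only on $k$ and $\Sigma$; the number of types is then at most the product of these bounds, and it is at least $1$ (the word $a^n$ is always a singleton), giving $\Theta(1)$.

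First I would bound the number of runs and the lengths of their roots. Since each $r_i$ is $k$-full, we have $|F_{k-1}(r_i^{\circ})| = |r_i|$, and by \autoref{prop:singletonstructure} the sets $F_{k-1}(r_i^{\circ})$ are pairwise disjoint subsets of $\Sigma^{k-1}$. Summing over $i$ gives $\sum_{i=1}^s |r_i| \leq |\Sigma|^{k-1}$, which bounds both the number of runs $s$ and each root length $|r_i|$ by $|\Sigma|^{k-1}$. Consequently there are only constantly many possible tuples $(r_1, \ldots, r_s)$. The signs $\{\sigma_i\}$ range over $\{-1,+1\}^{s-1}$, at most $2^{|\Sigma|^{k-1}}$ choices. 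For the fractional parts, the definition of $w^q$ forces $q_i |r_i| \in \N$, so $\langle q_i \rangle \in \{0, 1/|r_i|, \ldots, (|r_i|-1)/|r_i|\}$, leaving at most $|r_i| \leq |\Sigma|^{k-1}$ choices per index.

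The main work is to bound the lengths of the connecting words $t_i$, which is where the argument could a priori fail, since these lengths are not obviously controlled. For an overlap ($\sigma_i = -1$) we have $|t_i| \leq k-2$ by the overlap condition on returns. For a genuine gap ($\sigma_i = +1$), as well as for the boundary words $t_0$ and $t_s$, I would use a pigeonhole argument: by \autoref{cor_power}, any factor of length $k-1$ occurring at least three times in the singleton $u$ lies entirely inside a single run, so any length-$(k-1)$ factor whose occurrence overlaps $t_i$ (and thus sits outside every run, up to the bounded overlaps) occurs at most twice in $u$. The number of length-$(k-1)$ windows overlapping $t_i$ is of order $|t_i| + k$; since each of the at most $|\Sigma|^{k-1}$ distinct factors of length $k-1$ is realized by at most two of these windows, we get $|t_i| + k - 2 \le 2|\Sigma|^{k-1}$, hence $|t_i| = O(|\Sigma|^{k-1})$. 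Thus each $t_i$ is a word over $\Sigma$ of length below a fixed constant, so there are only constantly many choices for the tuple $(t_0, \ldots, t_s)$.

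Combining the four bounds, the total number of types is at most a product of constants, hence $O(1)$; together with the trivial lower bound this yields $\Theta(1)$. The delicate step — and the one I expect to require the most care — is the bound on $|t_i|$ in the $\sigma_i = +1$ and boundary cases, where one must argue cleanly that windows overlapping $t_i$ cannot reuse a length-$(k-1)$ value more than twice, invoking \autoref{cor_power} to forbid a third occurrence without accidentally double-counting occurrences that already sit inside an adjacent run.
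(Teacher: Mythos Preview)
Your proposal is correct and follows essentially the same approach as the paper: bound $s$ and the $|r_i|$ via the disjointness of the sets $F_{k-1}(r_i^{\circ})$, deduce that the fractional parts $\langle q_i\rangle$ are bounded, and bound each $|t_i|$ by a pigeonhole argument on length-$(k-1)$ factors occurring at most twice. Your treatment of the $|t_i|$ bound is slightly more careful than the paper's (distinguishing the overlap case and counting windows explicitly), but the underlying idea is identical.
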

\begin{proof}
First notice that the lengths of $r_i$ are bounded. In fact, the sum of
the lengths of $r_i$ is bounded by $|\Sigma|^{k-1}$. Indeed, from
the definition of a $k$-full run we have that
$F_{k-1}(r_i^{\circ})=|r_i|$ and from
\autoref{prop:singletonstructure} we obtain that
$F_{k-1}(r_i^{\circ}) \cap
F_{k-1}(r_j^{\circ}) = \emptyset$.
Therefore, the sum of lengths of $r_i$ is bounded by the
total number of words of length $k-1$ on the alphabet $\Sigma$,
i.e., by $|\Sigma|^{k-1}$. It follows that the number of fractional
parts $\{q_i\}$ is bounded.
 Also, the length of $t_i$ is bounded
(e.g., by 2$|\Sigma|^{k-1}$). Indeed, each $t_i$ can contain, as a
factor, each word of length $k-1$ at most twice, so its length is
at most twice the number of all words of length $k-1$,
i.e., $2|\Sigma|^{k-1}$.

Now, since the lengths of the words $r_i$ and $t_i$ are bounded
and the number of the fractional parts $\{q_i\}$ is bounded, we
conclude that the numbers of all the elements defining the type
of the class are bounded by a constant which does not depend on
$n$. Hence the number of types of $k$-abelian singletons is $\Theta(1)$.
\end{proof}

\section{On the number of singleton classes}\label{sec:numberOfSingletons}
The main goal of this section is to prove the following theorem:

\begin{theorem}\label{th:singletons}
The number of $k$-abelian singleton classes of length $n$ over an
$m$-ary alphabet is of order $\mathcal O (n^{N_m(k-1)-1})$, where
\begin{equation}\label{eq:numberOfNecklaces}
N_m(l)=\tfrac{1}{l}\sum_{d\mid l}\varphi(d)m^{l/d}
\end{equation}
is the number of necklaces of length $l$ over an $m$-ary alphabet and
$\varphi$ is Euler's totient function.
\end{theorem}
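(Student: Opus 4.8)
The plan is to count singletons by first fixing the \emph{type} of the singleton, then counting how many singletons of each length $n$ share a given type, and finally summing over all types. By \autoref{lem:boundedTypes} there are only $\Theta(1)$ types, so the number of types contributes only a constant factor and the asymptotic order is governed by the maximum, over all types, of the number of length-$n$ singletons of that type. Thus it suffices to bound the latter quantity by $\mathcal O(n^{N_m(k-1)-1})$.

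Once a type \eqref{eq:type} is fixed, the data $\{r_i\}$, $\{\langle q_i\rangle\}$, $\{t_i\}$, and $\{\sigma_i\}$ are all determined, and the only remaining freedom is in the integer parts $\lfloor q_i\rfloor$ of the exponents, as noted in the discussion following \eqref{eq:type}. Hence a singleton of a given type of length $n$ corresponds to a choice of nonnegative integers $\lfloor q_1\rfloor,\ldots,\lfloor q_s\rfloor$ subject to the single linear constraint that the total length equals $n$: writing the contribution of each run as $|r_i|\lfloor q_i\rfloor$ plus a fixed additive constant (coming from the fractional parts, the $t_i$, and the boundary words $t_0,t_s$), the equation $\sum_{i=1}^s |r_i|\lfloor q_i\rfloor = n - C$ must hold for the type's constant $C$. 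The number of solutions in nonnegative integers of such an equation is $\mathcal O(n^{s-1})$, since fixing $s-1$ of the exponents freely (each ranging over $\mathcal O(n)$ values) determines the last one up to a divisibility condition. Therefore the number of length-$n$ singletons of a fixed type with $s$ runs is $\mathcal O(n^{s-1})$.

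It remains to bound the number $s$ of runs appearing in any single type. The key structural fact, coming from \autoref{prop:singletonstructure}, is that the necklaces $r_i^{\circ}$ have \emph{disjoint} sets of factors of length $k-1$: $F_{k-1}(r_i^{\circ})\cap F_{k-1}(r_j^{\circ})=\emptyset$ for $i\neq j$. Moreover each $r_i$ is $k$-full, so $|F_{k-1}(r_i^{\circ})|=|r_i|\geq 1$ and each $r_i^{\circ}$ is a genuine necklace of length $|r_i|$; more to the point, each $r_i^{\circ}$ contributes at least one factor of length $k-1$, and these contributions are disjoint across different $i$. Interpreting each factor of length $k-1$ as a vertex of $dB(k-1)$, the runs correspond to vertex-disjoint cycles in the de Bruijn graph of order $k-1$, so $s$ is at most the maximum number of vertex-disjoint cycles in $dB(k-1)$. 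The cleanest bound, and the one matching the exponent in the statement, is to observe that each necklace $r_i^{\circ}$ has a period $|r_i|\geq 1$ and corresponds to a distinct necklace (circular word) of length $\leq k-1$ over the $m$-ary alphabet; since the de Bruijn graph of order $k-1$ decomposes into at most $N_m(k-1)$ vertex-disjoint primitive cycles (each primitive necklace of length $k-1$ yielding one such cycle), we get $s\leq N_m(k-1)$.

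The main obstacle is precisely this last step: establishing that $s\leq N_m(k-1)$, i.e.\ that the number of runs in any singleton type is bounded by the number of necklaces of length $k-1$. The disjointness of the $F_{k-1}(r_i^{\circ})$ gives $\sum_i |r_i|\leq m^{k-1}$, which only yields $s\leq m^{k-1}$ and is too weak. To sharpen this to $N_m(k-1)$ one must use the cyclic structure more carefully: each run $r_i^{\circ}$, being $k$-full and primitive, traces out a cycle in $dB(k-1)$ whose vertex set is a union of full conjugacy classes (necklaces) of length $k-1$ words, and these cycles are vertex-disjoint. Since $dB(k-1)$ has $m^{k-1}$ vertices partitioned into $N_m(k-1)$ necklaces under the cyclic shift, and each run must ``use up'' at least one entire necklace's worth of rotationally-closed structure, the number of simultaneously realizable disjoint runs is at most $N_m(k-1)$. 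Making this counting rigorous — relating the periodic factor structure of a $k$-full run to a whole necklace of $(k-1)$-factors, and verifying that distinct runs cannot share a necklace — is the crux of the argument and is where the connection to cycle decompositions of the de Bruijn graph announced in the introduction does its work.
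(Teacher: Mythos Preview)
Your overall strategy---fix a type, count $\mathcal O(n^{s-1})$ singletons per type, then bound $s$---matches the paper exactly (this is the content of \autoref{prop:smax} and \autoref{lem:boundedTypes}). The gap is entirely in your final step, where you try to show $s\leq N_m(k-1)$.

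Your proposed mechanism is that each cycle traced by a $k$-full run $r_i^{\circ}$ in $dB(k-1)$ has a vertex set which is ``a union of full conjugacy classes (necklaces) of length $k-1$ words,'' so that distinct runs use up disjoint necklaces. This is false. A cycle in $dB(k-1)$ need not be a union of conjugacy classes of length-$(k-1)$ words. For a concrete counterexample (appearing in the paper itself), take $k=5$ and the $5$-full word $00011$ of length $5$: its cycle in $dB(4)$ visits the vertices $0001,0011,0110,1100,1000$. The conjugacy class of $0001$ is $\{0001,0010,0100,1000\}$ and the conjugacy class of $0011$ is $\{0011,0110,1100,1001\}$; the cycle contains only two of the four elements of the first class and three of the four of the second. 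So the ``each run consumes at least one whole necklace'' heuristic does not go through, and no elementary counting argument of this kind will give the bound.

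The paper closes the gap by invoking a nontrivial external result: Mykkeltveit's theorem (\autoref{thm:MykkeltveitLempel}, originally Lempel's conjecture), which says that the minimum size of a feedback vertex set in $dB_\Sigma(n)$ is exactly $N_{|\Sigma|}(n)$. From this the bound on vertex-disjoint cycles is immediate: choosing one vertex from each of $s$ vertex-disjoint cycles yields a feedback vertex set of size $s$, hence $s\leq N_m(k-1)$. You correctly identified this step as the crux, but the tool you need is Mykkeltveit's theorem, not a direct combinatorial assignment of runs to necklaces.
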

The sequence $(N_2(l))_{l=0}^{\infty}$ is sequence A000031 in Sloane's
encyclopedia of integer sequences. The first few values of the sequence
are
\begin{equation*}
1, 2, 3, 4, 6, 8, 14, 20, 36, 60, 108, 188, 352, 632, 1182, 2192, 4116, 7712, 14602,\ldots.
\end{equation*}
See also the sequences A001867--A001869 for alphabets of size $3$--$5$.

We conjecture that this upper bound is tight, i.e., in fact the
number of $k$-abelian singleton classes is of order
$\Theta (n^{N_m(k-1)-1})$ (see \autoref{conj:SingularClasses}).

\subsection{A first upper bound for the number of singletons}
We shall first show that the number of singletons of length $n$ is
bounded by a polynomial in $n$ whose exponent is connected to
representation \eqref{eq:k-abelianSingularFactorization}.

Consider now the set of singletons of length $n$ defined by the type of
form \eqref{eq:type}. The size of this set is equal to the number of
integer solutions $(y_1, \ldots, y_s)$ of the equation
\begin{equation}
    \sum_{i=1}^s|r_i| (y_i + \langle q_i \rangle) + \sum_{i=0}^{s}\sigma_i |t_i| = n.
\end{equation}
Here, for each $i = 1,\ldots,s$, we have the restriction
\begin{equation}\label{eq:variableRestriction}
	y_i\geq \tfrac{k-1}{|r_i|} + 2 - \langle q_i\rangle
\end{equation}
so that the run $r_i^{y_i + \langle q_i\rangle }$ is indeed a $k$-full run. Of
course, the equation might not have any solutions (e.g., for parity reasons). In any
case, letting $n$ grow, the number of solutions $(y_1, \dots, y_s)$ is of order
$\mathcal O \left( n^{s-1} \right)$.

\begin{proposition}\label{prop:smax}
The number of $k$-abelian singletons of length $n$ is of order $\Theta(n^{s_{\max}-1})$,
where $s_{\max}$ is the maximal $s$ among the representations
\eqref{eq:k-abelianSingularFactorization} which correspond to singletons.
\end{proposition}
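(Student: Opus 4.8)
The plan is to reduce everything to counting lattice points, type by type. Write $c(n)$ for the number of $k$-abelian singletons of length $n$ and, for a fixed type $\tau$ with parameter $s = s(\tau)$, write $c_\tau(n)$ for the number of those of type $\tau$. By \autoref{prop:singletonstructure} together with the observation following \eqref{eq:type} (once one choice of the integer parts $\lfloor q_i\rfloor$ yields a singleton, so does every choice satisfying \eqref{eq:k-fullRepLength}), we have $c(n) = \sum_\tau c_\tau(n)$, where $\tau$ ranges over the types that correspond to singletons; by \autoref{lem:boundedTypes} there are only $\Theta(1)$ such types. Here $c_\tau(n)$ is exactly the number of integer tuples $(y_1,\dots,y_s)$ solving
\[
\sum_{i=1}^s|r_i|\,(y_i + \langle q_i \rangle) + \sum_{i=0}^{s}\sigma_i |t_i| = n
\]
subject to the restrictions \eqref{eq:variableRestriction}, and distinct tuples yield distinct words since the run lengths are recoverable from the word.

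For the upper bound I would note that each $c_\tau(n)$ is $\mathcal{O}(n^{s(\tau)-1})$, as already observed just before the statement. Since $s(\tau)\le s_{\max}$ for every type corresponding to a singleton and the number of such types is $\Theta(1)$, summing yields $c(n)=\mathcal{O}(n^{s_{\max}-1})$.

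For the lower bound I would fix a maximal type $\tau^\ast$, that is, one realizing an actual singleton with $s(\tau^\ast)=s_{\max}$; such a type exists by the definition of $s_{\max}$. Collecting the constant contributions into $C\in\Z$ and substituting $z_i = y_i - L_i$, where $L_i$ is the integer lower bound coming from \eqref{eq:variableRestriction}, $c_{\tau^\ast}(n)$ becomes the number of nonnegative integer solutions of the single linear equation $\sum_{i=1}^{s_{\max}}|r_i|\,z_i = n - C$. This denumerant is a quasi-polynomial in $n$ of degree $s_{\max}-1$ with strictly positive leading coefficient $\bigl((s_{\max}-1)!\,\prod_i|r_i|\bigr)^{-1}$, supported on $n\equiv C \pmod{g}$ with $g=\gcd_i|r_i|$. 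Hence $c_{\tau^\ast}(n)=\Theta(n^{s_{\max}-1})$ for all large $n$ in that residue class, which already pins down the exponent: even unconditionally, letting each $y_i$ range over an interval of length $T$ produces $(T+1)^{s_{\max}}$ distinct singletons whose lengths lie in an interval of size $\mathcal{O}(T)$, so by pigeonhole $\limsup_n c(n)/n^{s_{\max}-1}>0$.

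The main obstacle is to upgrade this lower bound from an arithmetic progression to \emph{all} large $n$: a single maximal type contributes nothing when $n\not\equiv C\pmod g$. I would resolve this by showing that the maximal types jointly realize every sufficiently large length $n$ — either by exhibiting a maximal type whose run lengths satisfy $\gcd_i|r_i|=1$ (which alone realizes all large $n$), or by perturbing the connecting words $t_0,t_s$ of $\tau^\ast$ so as to shift its length constant $C$ through a full set of residues while preserving both the singleton property and the value $s=s_{\max}$. Once such residue coverage is established, $c(n)=\Omega(n^{s_{\max}-1})$ for all large $n$, and combined with the upper bound this gives $c(n)=\Theta(n^{s_{\max}-1})$.
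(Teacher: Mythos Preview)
Your upper bound argument matches the paper's exactly. The issue is in the lower bound: you correctly identify the residue-class obstruction---a single maximal type $\tau^\ast$ only contributes on the progression $n\equiv C\pmod{g}$ with $g=\gcd_i|r_i|$---but your two proposed fixes are left as assertions. Exhibiting a maximal type with $g=1$ would require an independent structural argument, and ``perturbing $t_0,t_s$ through a full set of residues while preserving the singleton property and $s=s_{\max}$'' is precisely the nontrivial point; as stated, neither is proved, so the lower bound is not established for all large $n$ (your pigeonhole remark only gives the $\limsup$).

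The paper sidesteps the residue issue by a simple truncation trick, which is really a clean implementation of your second fix. Instead of fixing the length equal to $n$, count the solutions $(y_1,\dots,y_{s_{\max}})$ giving a word of length in the window $[n,\,n+|r_{s_{\max}}|)$; for each admissible choice of $(y_1,\dots,y_{s_{\max}-1})$ there is exactly one $y_{s_{\max}}$ landing in this window, so there are $\Theta(n^{s_{\max}-1})$ such solutions. Each yields a singleton of some length $n\le L<n+|r_{s_{\max}}|$; deleting the last $L-n$ letters produces a singleton of length exactly $n$ (a prefix of a $k$-abelian singleton is again a singleton, since appending a common letter preserves $\sim_k$), possibly of a different type. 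Finally, two distinct solutions differ in some $y_i$ with $i<s_{\max}$ (a difference only in $y_{s_{\max}}$ would force a length gap of at least $|r_{s_{\max}}|$), hence the corresponding words already differ at a position preceding the last run, and this difference survives truncation. This gives $\Omega(n^{s_{\max}-1})$ singletons of length $n$ for every large $n$, with no residue bookkeeping needed.
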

\begin{proof}
Let the type of a $k$-abelian singleton $u$ be defined by the tuple
\begin{equation*}
(\{r_i\}_{i=1}^{s}, \{\langle q_i\rangle\}_{i=1}^s, \{t_i\}_{i=0}^{s}, \{\sigma_i\}_{i=1}^{s-1}).
\end{equation*}

Consider then the set of singletons of the same type as $u$ and which have length
at least $n$ but less than $n+|r_s|$ (i.e., we allow the $y_i=\lfloor q_i \rfloor$s
to vary). The size of this set is equal to the number of integer solutions
$(y_1, \dots, y_s)$ to
\begin{equation*}
    n\leq \sum_{i=1}^s|r_i| (y_i + \langle q_i \rangle)
        + \sum_{i=0}^{s}\sigma_i |t_i| < n+|r_s|,
\end{equation*}
where each $y_i$, $i=1,\ldots,s$, satisfies \eqref{eq:variableRestriction}. Here we
add the range $|r_s|$ to the length $n$ in order to ensure the existence of solutions
for $n$ large enough.

Each such solution corresponds to a $k$-abelian singleton of length at least $n$ but
less than $n+|r_s|$. By deleting letters from the end, we obtain a singleton (possibly
of a different type, since we modify the end of the word) of length $n$. It is
straightforward to check that two distinct solutions correspond to two distinct
singletons of length $n$. Letting $n$ grow, the number of such solutions is of order
$\Theta(n^{s-1})$, and they correspond to a collection of distinct $k$-abelian
singletons of length $n$.

By choosing $u$ as a singleton with maximal $s$ in its representation, we obtain the
lower bound $\Omega(n^{s_{\max}-1})$ in the claim. Furthermore, by
\autoref{lem:boundedTypes}, the number of distinct types is of order $\Theta(1)$. By
summing over all types we get the upper bound $\mathcal O(n^{s_{\max}-1})$. The claim
follows.
\end{proof}

We shall now proceed to obtain upper bounds for $s_{\max}$ in the above proposition.
For this we observe a connection between $k$-abelian singletons and \emph{cycle
decompositions} of the de Bruijn graph of degree $k-1$.

\subsection{Singletons as cycle semi-decompositions of de Bruijn graphs}\label{subsec:singletonAsDecomposition}
Consider now a $k$-abelian singleton $u$ with representation
\eqref{eq:k-abelianSingularFactorization} as a path in the de Bruijn graph $dB(k-1)$.
Each $k$-full run $r^q$ corresponds to a cycle in de Bruijn graph. The path induced
by $u$ in $dB(k-1)$ can be seen to enter and leave these cycles, never to return
again. A $k$-abelian singleton can thus be seen as a decomposition of $dB(k-1)$ into
vertex disjoint cycles (corresponding to the runs $r_i^{q_i}$) which are connected
by certain paths (defined by the words $t_i$). As we are interested in maximizing
$s$ in representation \eqref{eq:k-abelianSingularFactorization}, the above translates
to finding a decomposition of $dB(k-1)$ into the largest number of cycles which are
connected by paths. We shall now make the above discussion rigorous.

Let $G = (V,E)$ be a graph and let $C = \{C_1,\ldots, C_m\}$ be a set of
vertex-disjoint cycles of $G$. Let $V_i$ be the set of vertices in $C_i$, and let
$V_{\otimes} = V \setminus \bigcup_{i=1}^m V_i$. The set consisting of the partitions
$V_i$, $i=1,\ldots,m$ and $\{v\},v\in V_{\otimes}$, is called a \emph{cycle
semi-decomposition} of $G$, denoted by $V/C$.

\begin{definition}
Let $G=(V,E)$ and let $V/C$ be a cycle semi-decomposition. We define the quotient
graph $G/C = (V/C,E')$ with respect to $C$ as follows. For $X,Y\in V/C$, $X \neq Y$,
we have $(X,Y) \in E'$ if and only if there exist $x\in X$ and $y\in Y$ such that
$(x,y)\in E$.
\end{definition}

Let $u$ be a $k$-abelian singleton. Consider the graph $G_u$ obtained from
$G_{\Psi_k(u)}$ by removing multiplicities of edges. Let $C_u = \{C_1,\ldots,C_s\}$ be
the set of vertex disjoint cycles; the sets of vertices which the path induced by $u$
visits at least three times. The graph $G_u/C_u$ then contains a path which traverses
through each $V_i$ once and through each $\{v\}$, $v\in V_{\otimes}$,
at least once and at most twice.

\begin{example}
Consider the graph $G_u$ induced by the $4$-abelian singleton
$u = 2 (01)^3 [2] (0110)^{15/4} [011]^{(-1)} 1^6$. The vertex-disjoint cycles
corresponding to $u$ are $V_1 = \{010,101\}$, $V_2 = \{011,110,100,001\}$ and
$V_3 = \{111\}$. The set of factors occurring at most twice in $u$ is
$V_{\otimes} = \{201,012,120\}$. The quotient graph $G_u/C_u$ is displayed in
\autoref{fig:quotientGraph}.%\xqed
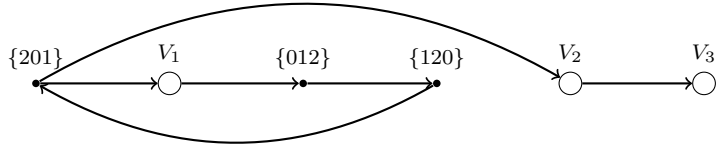
\begin{figure}
    \centering
    \begin{tikzpicture}[node distance=50pt, cycle node/.style={inner sep=3pt,circle,draw},
            single node/.style={inner sep=1pt, circle,fill}, every node/.style={rectangle}]

\node[single node] [label={\footnotesize{$\{201\}$}}] (A) {};
\node[cycle node] (B) [right of=A,label={\footnotesize{$V_1$}}] {};
\node[single node] (V1) [right of=B,label={\footnotesize{$\{012\}$}}] {};
\node[single node] (V2) [right of=V1,label={\footnotesize{$\{120\}$}}] {};
\node[cycle node] (C) [right of=V2,label={\footnotesize{$V_2$}}] {};
\node[cycle node] (D) [right of=C,label={\footnotesize{$V_3$}}] {};

\draw (A) edge[->,thick] (B);
\draw (B) edge[->,thick] (V1);
\draw (V1) edge[->,thick] (V2);
\draw (V2) edge[->,thick,out=210,in=330] (A);
\draw (A) edge[->,thick,out=30,in=150] (C);
\draw (C) edge[->,thick] (D);

\end{tikzpicture}
\caption{The quotient graph $G_u/C_u$ induced by $u =
2 (01)^3 [2] (0110)^{15/4} [011]^{-1} 1^6$.} \label{fig:quotientGraph}
\end{figure}
\end{example}

By \autoref{rem:singletonStructure}, we obtain the following.

\begin{proposition}\label{prop:smaxcycles}
Let $G$ be the de Bruijn graph of order $k-1$. Then $s_{\max}$ (as in \autoref{prop:smax})
is bounded by the largest $s$ such that $C$ is a set of cardinality $s$ of vertex-disjoint
cycles and $G/C$ contains a path which traverses each $V_i$ precisely once and each
$\{v\}$, $v\in V_{\otimes}$, at most twice.
\end{proposition}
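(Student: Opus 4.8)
The plan is to translate the statement of Proposition~\ref{prop:smaxcycles} directly from the structural description of singletons in Proposition~\ref{prop:singletonstructure} and its refinement in Remark~\ref{rem:singletonStructure}, via the dictionary set up in Subsection~\ref{subsec:singletonAsDecomposition}. The key observation is that the quantity $s$ appearing in the factorization \eqref{eq:k-abelianSingularFactorization} is exactly the number of $k$-full runs $r_i^{q_i}$, and that each such run has been identified with a vertex-disjoint cycle $C_i$ in the de Bruijn graph $dB(k-1)$ (since $F_{k-1}(r_i^\circ)$ has cardinality $|r_i|$ and these sets are pairwise disjoint by Proposition~\ref{prop:singletonstructure}). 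So the goal reduces to bounding $s$ by a purely combinatorial quantity about $G = dB(k-1)$.

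First I would make the correspondence $u \mapsto (G_u, C_u)$ explicit in one direction. Given a singleton $u$ with maximal $s = s_{\max}$, I take $C_u = \{C_1, \ldots, C_s\}$ to be the collection of cycles traced by the runs $r_i^{q_i}$; by Proposition~\ref{prop:singletonstructure} these are vertex-disjoint. The factors of length $k-1$ occurring outside the runs are exactly those overlapping or contained in the connecting words $t_i$, and by the ``Furthermore'' clause of Proposition~\ref{prop:singletonstructure} (equivalently Remark~\ref{rem:singletonStructure}) each such factor occurs \emph{at most twice} in $u$. Reading $u$ as a path in $dB(k-1)$, this path enters each cycle, traverses it, and leaves without returning, so in the quotient $G_u/C_u$ it visits each contracted vertex $V_i$ exactly once; meanwhile each singleton vertex $\{v\}$ with $v \in V_{\otimes}$ is visited at most twice, precisely because its underlying factor occurs at most twice in $u$. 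Hence the path induced by $u$ in $G_u/C_u$ is of the type described in the statement, and the existence of a singleton with $s = s_{\max}$ cycles yields a set $C$ of $s_{\max}$ vertex-disjoint cycles admitting such a traversing path. This gives the asserted bound $s_{\max} \le s$, where $s$ is the largest cardinality of a cycle set with the stated path property.

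The main step requiring care is verifying that the path in the quotient graph genuinely respects the two visit bounds, i.e.\ that contracting each cycle to a single vertex does not create a spurious revisit. For the $V_i$ this is clear: once the path leaves the run $r_i^{q_i}$ it cannot return, since returning would force some factor of $F_{k-1}(r_i^\circ)$ to have two distinct returns, contradicting Proposition~\ref{prop:characterizationReturnwords}. For the singleton vertices the bound of two follows directly from the occurrence count in Remark~\ref{rem:singletonStructure}, since a vertex $v \in V_{\otimes}$ corresponds to a factor occurring at most twice and each occurrence contributes at most one visit. The principal obstacle — and the reason the statement reads ``is bounded by'' rather than ``equals'' — is that this construction is only one-directional: not every cycle set $C$ with a traversing path need arise from an actual singleton, because the connecting paths $t_i$ must be realizable as genuine words in which the boundary factors do not accidentally coincide with run factors or create additional returns. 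I therefore only claim the inequality $s_{\max} \le s$, which is exactly what Proposition~\ref{prop:smaxcycles} asserts, and I would not attempt to prove equality here.
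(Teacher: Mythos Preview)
Your proposal is correct and follows essentially the same route as the paper, which simply asserts the proposition as a consequence of Remark~\ref{rem:singletonStructure} (together with the preceding discussion identifying runs with vertex-disjoint cycles). You have carefully fleshed out the details the paper leaves implicit, in particular the verification that each $V_i$ is visited exactly once (via Corollary~\ref{cor_power} or equivalently the return-word argument) and each singleton vertex at most twice (via the ``Furthermore'' clause of Proposition~\ref{prop:singletonstructure}).
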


\begin{remark}
Note that $s_{\max}$ is bounded below by the largest $s$ such that $C$ is a set of size
$s$ of vertex-disjoint cycles and $G/C$ contains a path which traverses through each $V_i$
precisely once and through each $\{v\}$, $v\in V_{\otimes}$, at most once.
\end{remark}

The following theorem, originally known as Lempel's conjecture, was proved by J. Mykkeltveit
in \cite{Myk72}.

\begin{theorem}\label{thm:MykkeltveitLempel}
The minimum number of vertices which, if removed from $dB_{\Sigma}(n)$, will leave a
graph with no cycles, is $N_{|\Sigma|}(n)$ (defined by \eqref{eq:numberOfNecklaces}).
\end{theorem}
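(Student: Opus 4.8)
The plan is to reconstruct the two matching bounds of Mykkeltveit's theorem (settling Lempel's conjecture). Write $m=|\Sigma|$ and identify $\Sigma=\{0,1,\dots,m-1\}$; a set of vertices whose removal destroys all cycles is a feedback vertex set, and the claim is that the minimum such size is $N_m(n)$. I would establish the lower bound $\geq N_m(n)$ and the upper bound $\leq N_m(n)$ separately, the latter being the real content.

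For the lower bound I would exhibit $N_m(n)$ pairwise vertex-disjoint cycles. Let $\rho(a_1a_2\cdots a_n)=a_2\cdots a_na_1$ be the cyclic rotation of $\Sigma^n$. For every vertex $v=a_1\cdots a_n$ the de Bruijn graph contains the edge $v\to\rho(v)$ (namely the edge $ax\to xb$ with $a=b=a_1$ and $x=a_2\cdots a_n$), so each orbit of $\rho$ is a directed cycle of $dB_\Sigma(n)$ (a self-loop when the orbit is a fixed point, a cycle of length $d$ when the word has period $d\mid n$). Distinct orbits are vertex-disjoint, and by Burnside's lemma their number is exactly $\tfrac1n\sum_{d\mid n}\varphi(d)m^{n/d}=N_m(n)$. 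Any feedback vertex set must contain a vertex from each of these disjoint cycles, so at least $N_m(n)$ vertices are needed.

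The upper bound is the substance of the theorem, and here I would follow Mykkeltveit's potential-function idea. Put $\omega=e^{2\pi i/n}$ and assign to each vertex $v=v_1\cdots v_n$ the complex number $Z(v)=\sum_{j=1}^n v_j\,\omega^{j}$. The key algebraic fact, verified by direct computation, is that a de Bruijn edge $v\to w$ acts by
\[ Z(w)=\omega^{-1}Z(v)+c,\qquad c=b-v_1\in\mathbb{R}, \]
so that \emph{every} edge rotates $Z$ clockwise through the fixed angle $2\pi/n$ and then translates it horizontally; in particular $\operatorname{Im}Z(w)=\operatorname{Im}(\omega^{-1}Z(v))$, independently of the symbols read. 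I would then define the candidate feedback set $M$ to consist of the vertices at which this clockwise rotation carries $Z$ across the positive real axis, from the closed upper half-plane into the open lower half-plane, supplemented by an explicit tie-breaking convention for vertices lying on the real axis or at the origin (needed to handle imprimitive words and the degenerate orbit $Z=0$).

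Two assertions then remain. First, that $M$ is a feedback vertex set: since each edge rotates $Z$ clockwise by the same positive angle, a closed walk cannot keep $Z$ in the upper half-plane along the whole cycle, so every cycle must contain a crossing vertex of $M$; making this rigorous is where the accumulated real translations $c$ must be controlled. Second, and this is where I expect the genuine difficulty, that $|M|=N_m(n)$: one must match the crossing vertices against the rotation orbits of the lower bound, carefully accounting for words with short (periodic) orbits and for the boundary cases on the real axis, so that the count collapses exactly to the necklace formula. This exact cardinality computation is the crux of Mykkeltveit's argument and is the step I anticipate to be the main obstacle; by contrast, the lower bound and the algebraic identity for $Z$ are comparatively routine.
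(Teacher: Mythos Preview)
The paper does not give its own proof of this theorem: it is quoted as a known result (``originally known as Lempel's conjecture, was proved by J.~Mykkeltveit in~\cite{Myk72}'') and is used as a black box to bound $s_{\max}$ in \autoref{prop:smaxcycles}. So there is nothing to compare against on the paper's side.

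That said, your sketch is a faithful outline of Mykkeltveit's actual argument. The lower bound via the $N_m(n)$ vertex-disjoint rotation cycles is correct and routine, and your derivation of $Z(w)=\omega^{-1}Z(v)+(b-v_1)$ is exactly the key algebraic identity. You are also right about where the work lies: in Mykkeltveit's paper the delicate part is not showing that ``every cycle must cross the real axis'' in the naive sense, but rather constructing a specific set $M$ consisting of one representative per necklace (so that $|M|=N_m(n)$ by construction) and then proving that this particular choice is a feedback vertex set. The argument that the real translations $c$ cannot prevent a crossing, together with the handling of words with $Z(v)=0$ (which correspond to certain imprimitive words and require a separate combinatorial treatment), is indeed the crux, and your proposal correctly flags these as the main obstacles rather than claiming to resolve them.
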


It follows that a set of vertex-disjoint cycles of $dB_{\Sigma}(n)$ can contain at most
$N_{|\Sigma|}(n)$ cycles. By the above theorem and \autoref{prop:smaxcycles}, we
immediately obtain \autoref{th:singletons}.

\begin{definition}
A cycle semi-decomposition $V/C$ of $dB_{\Sigma}(n)$ is called \emph{maximal} if
$C$ contains $N_{|\Sigma|}(n)$ cycles.
\end{definition}

Note that maximal cycle (semi-)decompositions exist for any $n\in N$: take the cycles induced
by necklaces. However, the above theorems do not give $\Omega(n^{N_{|\Sigma|}(k-1)-1})$ for
the number of $k$-abelian singletons, since it gives only the maximal number of cycles in
the cycle decomposition of the de Bruijn graph: we would also need a path in the quotient
graph containing those cycles, i.e., we do not know whether the upper bound for $s_{\max}$
is achievable.

We now show that a maximal cycle semi-decomposition is actually a cycle decomposition,
that is, each vertex occurs in one of the cycles and $V_{\otimes}$ is empty.

\begin{proposition}\label{prop:maxVEmpty}
For any maximal cycle semi-decomposition $V/C$ of $dB_{\Sigma}(n)$,
each vertex occurs in one of the cycles of $C$.
\end{proposition}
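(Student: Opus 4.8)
The plan is to pass to the de Bruijn graph of one lower order and to reinterpret the statement in terms of edges rather than vertices. Recall that each vertex of $dB_\Sigma(n)$, being a word $a_1\cdots a_n$ of length $n$, is precisely an edge of $dB_\Sigma(n-1)$, namely the edge from $a_1\cdots a_{n-1}$ to $a_2\cdots a_n$. Under this identification a directed edge of $dB_\Sigma(n)$ joins two words $a_1\cdots a_n$ and $a_2\cdots a_n b$, which as edges of $dB_\Sigma(n-1)$ are consecutive (the head $a_2\cdots a_n$ of the first equals the tail of the second); in other words, $dB_\Sigma(n)$ is the line graph of $H := dB_\Sigma(n-1)$. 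Consequently a set of vertex-disjoint cycles of $dB_\Sigma(n)$ corresponds exactly to a set of edge-disjoint closed trails in $H$, and the desired conclusion $V_\otimes = \emptyset$ (every vertex of $dB_\Sigma(n)$ lies on a cycle of $C$) is equivalent to the assertion that these closed trails together use every edge of $H$. So it suffices to show that a maximal family of edge-disjoint closed trails in $H$ partitions all of $E(H)$.

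First I would record what maximality buys. Since $V/C$ is maximal, $C$ consists of $N_{|\Sigma|}(n)$ vertex-disjoint cycles, which by Mykkeltveit's theorem (\autoref{thm:MykkeltveitLempel}) is the largest possible number of vertex-disjoint cycles in $dB_\Sigma(n)$; hence no further vertex-disjoint cycle can be adjoined to $C$. Translating through the line-graph correspondence, the family of closed trails is maximal in the sense that there is no closed trail in $H$ edge-disjoint from all of them. In particular, letting $H'$ denote the subgraph of $H$ formed by the edges used by no trail, $H'$ can contain no directed cycle, since such a cycle would itself be a closed trail edge-disjoint from the family, contradicting maximality. Thus $H'$ is acyclic.

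The final step is a balance argument, which is where the de Bruijn structure is essential. Every vertex of $H=dB_\Sigma(n-1)$ has in-degree and out-degree both equal to $|\Sigma|$. A closed trail enters and leaves each vertex the same number of times, and it does so along distinct edges, so the edges used by the trails satisfy, at every vertex $v$, the equality of the number of used in-edges with the number of used out-edges; subtracting both counts from $|\Sigma|$ shows that the unused edges balance as well, i.e. $H'$ has equal in-degree and out-degree at each vertex. But a nonempty digraph in which every vertex has in-degree equal to out-degree necessarily contains a directed cycle (its edge set decomposes into cycles). Since $H'$ is acyclic, it must therefore have no edges at all, so the trails use every edge of $H$; unwinding the correspondence gives $V_\otimes = \emptyset$.

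I expect the main subtlety to be the translation itself: the crucial balance condition is invisible at the level of $dB_\Sigma(n)$, where an uncovered vertex simply contributes nothing, and it only becomes a genuine constraint after moving to the edge picture in $dB_\Sigma(n-1)$, where regularity forces the uncovered edges to be balanced. Once the problem is correctly phrased in $H$, the three remaining implications (maximality $\Rightarrow$ acyclic, regularity $\Rightarrow$ balanced, and balanced together with nonempty $\Rightarrow$ cyclic) are each routine.
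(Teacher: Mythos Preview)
Your proof is correct and follows essentially the same route as the paper: both pass to $H=dB_\Sigma(n-1)$ via the line-graph correspondence, use that $H$ is balanced (Eulerian), and then invoke a Veblen-type fact to show the unused edges must be empty. The paper phrases the last step as ``extend $\tilde C$ to a full edge-disjoint cycle decomposition of $H$, yielding more than $N_{|\Sigma|}(n)$ cycles'', while you phrase it as ``the leftover subgraph $H'$ is balanced and acyclic, hence empty''; these are the same argument, and your explicit distinction between closed trails and simple cycles is in fact slightly more careful than the paper's wording.
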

\begin{proof}
We first recall the following: Let $G$ be an Eulerian graph and $\tilde C$ a set of
edge-disjoint cycles of $G$. Then there exists a decomposition $\tilde D$ of $G$ into
edge-disjoint cycles such that $\tilde C \subseteq \tilde D$.
Indeed, since $G$ is Eulerian, each vertex $v$ has the property $d_G^+(v) = d_G^-(v)$. If $G'$
is the graph obtained from $G$ by removing edges occurring in $\tilde C$, then each vertex
$v\in G'$ has the property $d_{G'}^+(v) = d_{G'}^-(v)$. It follows from Veblen's theorem
(\cite{Veb12}) for directed graphs (see, e.g., \cite{BMu08}, exercise 2.4.2) that there exists a
decomposition $\tilde E$ of $G'$ into edge-disjoint cycles. Now $\tilde E \cup \tilde C = \tilde D$
is a decomposition of $G$ into edge-disjoint cycles satisfying the claim.%such that $\tilde C\subseteq \tilde D$.

Let then $C$ be a set of $N(n)$ vertex-disjoint cycles in $dB(n)$. The vertices of $dB(n)$
correspond to the edges of $dB(n-1)$, so that $C$ can be seen as a set $\tilde{C}$
of \emph{edge}-disjoint cycles of $dB(n-1)$, an Eulerian graph.

Suppose there is a vertex $v$ in $dB(n)$ not included in any of the cycles of $C$. Then
$v$ corresponds to an edge $e$ in $dB(n-1)$ which does not occur in $\tilde{C}$. By the
above, $\tilde C$ can be extended to a decomposition of $dB(n-1)$ into edge-disjoint cycles
$\tilde D$ such that $\tilde C \subset \tilde D$. But now $\tilde D$ can
be seen as a set of vertex-disjoint cycles of $dB(n)$ with more than $N(n)$ cycles, a
contradiction.
\end{proof}

\section{Maximal cycle decompositions and Gray codes for necklaces}
In this section we focus on maximal cycle decompositions. We are interested in finding a
maximal cycle decomposition $\Sigma^n/C$, such that $G=dB(n)/C$ contains a \emph{Hamiltonian path},
i.e., a path which visits each vertex precisely once. We note that, for a maximal cycle
decomposition of $dB(n)$, the quotient graph can be seen as undirected. To see this, let $(X,Y)$
be an edge of $G$, that is, there exist $a,b\in \Sigma$, $u\in \Sigma^{n-1}$ such that $au\in X$
and $ub\in Y$ whence $(au,ub)$ is an edge in $dB(n)$. By \autoref{prop:maxVEmpty}, $X$ and $Y$ are
cycles, so that there exist $c,d\in \Sigma$ such that $(au,uc)\in X$ and $(du,ub)\in Y$, whence
$(du,uc)$ is an edge in $dB(n)$. By definition, $(Y,X)\in G$ as well.

\subsection{On necklace graphs and Gray codes for necklaces}
We shall first consider the cycle decomposition of the de Bruijn graph given by the cycles
induced by necklaces. Note that the length of such a cycle divides the order of the de
Bruijn graph. Compared to the discussion in the beginning of \autoref{subsec:singletonAsDecomposition},
we have a special case where the lengths of the roots of the $k$-full runs divide $k-1$.
We begin with a definition.
\begin{definition}
Let $C_{\Sigma}(n)$ be the set of cycles induced by necklaces of length $n$.
The quotient graph $NG_{\Sigma}(n) = dB_{\Sigma}(n)/C_{\Sigma}(n)$ is called
the \emph{necklace graph} of order $n$.
\end{definition}

The following example shows that $NG(n)$ does not always contain a Hamiltonian path, so
that necklace graphs will not provide what we need.

\begin{example}
The binary necklace graphs of order $4$ and $5$ are illustrated in \autoref{fig:necklacegraphs}.
A longest path in $NG(4)$ contains 5 vertices out of a total of 6. On the other hand, one can
easily find a Hamiltonian path in $NG(5)$.
\begin{figure}
  \centering
  \begin{tikzpicture}[node distance = 60pt, every node/.style={ellipse,inner sep=.5pt,draw}]

    \def\x{5pt}
    \def\y{25pt}
    \node (1) at (0,0) {\footnotesize{$0000$}};
    \node (2) [right of = 1] {\footnotesize{$0001$}};
    \node (3) [above right =\x and \y of 2] {\footnotesize{$0011$}};
    \node (4) [below right = \x and \y of 2] {\footnotesize{$0101$}};
    \node (5) [below right = \x and \y of 3] {\footnotesize{$0111$}};
    \node (6) [right of = 5] {\footnotesize{$1111$}};

    \foreach \s/\t in {1/2,2/3,2/4,3/5,4/5,5/6}{
        \draw (\s) edge[thick] (\t);
    }
  \end{tikzpicture}
\vskip20pt
  \begin{tikzpicture}[node distance=60pt,every node/.style={ellipse,inner sep=.5pt,draw}]
        \def\x{5pt} %the vertical coordinate
        \def\y{25pt} %the horizontal coordinate

        \node (1) at (0,0) {\footnotesize{$00000$}};
    \node (2) [right of=1] {\footnotesize{$00001$}};
    \node (3) [above right=\x and \y of 2] {\footnotesize{$00011$}};
        \node (4) [below right=\x and \y of 2] {\footnotesize{$00101$}};
        \node (5) [right of=3] {\footnotesize{$00111$}};
        \node (6) [right of=4] {\footnotesize{$01011$}};
        \node (7) [below right=\x and \y of 5] {\footnotesize{$01111$}};
        \node (8) [right of=7] {\footnotesize{$11111$}};

        \foreach \s/\t in {1/2,2/3,2/4,3/5,3/6,4/5,4/6,5/7,6/7,7/8}{
            \draw (\s) edge[thick] (\t);
        }
  \end{tikzpicture}
  \caption{The binary necklace graphs $NG(4)$ and $NG(5)$. A vertex
	represents the necklace induced by its label.}
  \label{fig:necklacegraphs}
  \end{figure}
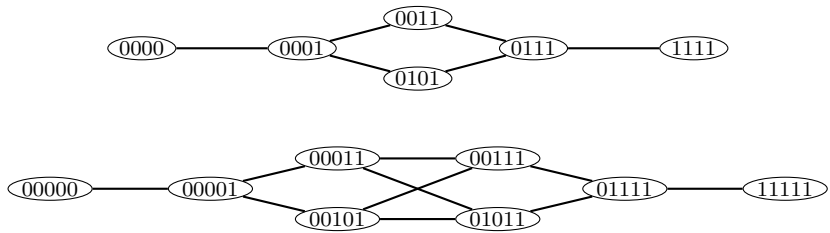 %\xqed
\end{example}

The problem of finding a Hamiltonian path in necklace graphs has been extensively studied in
terms of \emph{Gray codes} (\cite{DeDr07,Sav97,SRu99,WVa06,ZKC08}). A \emph{Gray code for necklaces}
of length $n$ is defined as a sequence of all necklaces of length $n$ such that two
consecutive necklaces have representatives which differ in one bit. One can easily see that
Gray codes for necklaces correspond to Hamiltonian paths in necklace graphs.
% It is straightforward to
%verify that Gray codes for necklaces correspond to hamiltonian paths in the necklace graph.
The following conjecture was stated almost 20 years ago. To the best of our knowledge it
remains unsolved to this day.

\begin{conjecture}[\cite{Sav97}, section 7]\label{con:GrayCode}
Let $n\in\N$ be odd and let $\Sigma$ be a binary alphabet. Then there exists a Gray
code for necklaces of length $n$. In other words, $NG(n)$ contains a Hamiltonian path.
\end{conjecture}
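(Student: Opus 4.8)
The plan is to attempt an explicit construction of a Hamiltonian path in $NG(n)$, organised around two structural features that single out the odd case. First, if two necklaces are adjacent in $NG(n)$ then they have representatives differing in a single bit, so their Hamming weights (the number of $1$'s, which is well defined on a necklace) differ by exactly one. Hence $NG(n)$ is bipartite, the two classes $\mathcal{E}$ and $\mathcal{O}$ being the necklaces of even and of odd weight. The complementation map $u^{\circ}\mapsto \overline{u}^{\circ}$, sending a necklace to its bitwise complement, is a graph automorphism of $NG(n)$ that sends weight $j$ to weight $n-j$. For $n$ odd the integers $j$ and $n-j$ have opposite parity, so complementation is a fixed-point-free involution interchanging $\mathcal{E}$ and $\mathcal{O}$; in particular $|\mathcal{E}| = |\mathcal{O}|$. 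This balance is exactly the necessary parity condition for a bipartite graph to admit a Hamiltonian path, and it fails in general for even $n$ (where $(0^n)^{\circ}$ and $(1^n)^{\circ}$ both have even weight and self-complementary necklaces of weight $n/2$ may occur), which is what motivates restricting to odd $n$.

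Second, I would use the observation that the constant necklaces $(0^n)^{\circ}$ and $(1^n)^{\circ}$ each have a unique neighbour in $NG(n)$: flipping any single bit of $0^n$ produces the same necklace $(0^{n-1}1)^{\circ}$. Being vertices of degree one, they cannot be internal to any Hamiltonian path, so they are forced to be its two endpoints, and since complementation swaps them it is natural to look for a path that is anti-symmetric under complementation, its second half being the complement of the reverse of its first half. Concretely, I would list necklaces in order of increasing weight, sweeping up from $(0^n)^{\circ}$ toward weight $\tfrac{n-1}{2}$; inside each fixed-weight class I would traverse a Gray-code-like sub-path (fixed-weight necklace Gray codes being available in the literature), join consecutive classes through a single bit-flip edge, and then obtain the top half (weights $\tfrac{n+1}{2},\dots,n$) for free by applying the complementation automorphism. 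In this scheme one only has to build roughly half of the path by hand.

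\textbf{The main obstacle} is controlling the interface between consecutive weight classes while meeting the complement-symmetric half in the middle. One must guarantee simultaneously (i) a Hamiltonian sub-path inside each weight class and (ii) compatible connecting edges of $NG(n)$ across all $\sim n/2$ class boundaries, with the two halves stitching together correctly near the middle weights $\tfrac{n\pm1}{2}$; moreover there is no evident recursion on $n$ to fall back on, since the conjugacy (cyclic) equivalence does not project cleanly onto shorter lengths, i.e.\ deleting or contracting a position of a necklace is not well defined up to rotation. This lack of an inductive handle on $n$, combined with the need to match endpoints across every weight boundary at once, is precisely why the problem has resisted a uniform solution; the computational verification for $n \le 15$ indicates that such a path exists but yields no general recipe. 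A complete proof would most plausibly require either an explicit closed-form ordering in the spirit of cool-lex / prefix-shift Gray codes, or a structural Hamiltonicity theorem tailored to the bipartite interface graphs between adjacent weight levels, in either case crucially exploiting the balance $|\mathcal{E}|=|\mathcal{O}|$ and the complementation automorphism established above.
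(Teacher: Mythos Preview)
This statement is a \emph{conjecture}, not a theorem: the paper explicitly records that it ``was stated almost 20 years ago'' and ``remains unsolved to this day.'' The paper offers no proof whatsoever; its only contribution toward the conjecture is the computational verification for odd $n\le 15$ recorded in Table~\ref{tab:GrayCodeOdd}. So there is no ``paper's own proof'' to compare against.

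Your proposal is not a proof either, and to your credit you say so. What you have written is a reasonable strategic analysis: the bipartiteness of $NG(n)$ with respect to weight parity, the observation that for odd $n$ complementation is a fixed-point-free involution swapping the two sides (hence $|\mathcal{E}|=|\mathcal{O}|$), and the forced endpoints $(0^n)^{\circ}$ and $(1^n)^{\circ}$ are all correct and are exactly the structural features that make the odd case plausible while the even case fails. But the heart of the matter---actually constructing the path---is precisely where you stop. You correctly identify the obstacle: there is no known inductive handle on $n$ for necklaces under rotation, and stitching together fixed-weight sub-paths across all weight boundaries simultaneously is the entire difficulty. Invoking ``fixed-weight necklace Gray codes being available in the literature'' does not close the gap, because even granting such sub-paths, the endpoint-matching across levels is not guaranteed, and this is the step that has resisted every attempt in the cited literature. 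In short, your write-up is an honest outline of why the conjecture is believable and why it is hard, which is appropriate for an open problem, but it should not be labelled a proof proposal.
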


The above has previously been verified for necklaces up to length $9$ in \cite{ZKC08}.
We have computationally verified the conjecture till $n=15$. For completeness, we give
Gray codes for binary words of lengths 5--15 in \autoref{tab:GrayCodeOdd}. There we
represent a Gray code as a word $a_1\ldots a_{N(n)-1}$ over the \emph{hexadecimals} to be
read as follows. The first necklace in the ordering is $0^n$. The $(i+1)$st necklace
is then obtained by complementing the $a_i$th letter ($0\leftrightarrow 1$) of the
lexicographically least representative of the $i$th necklace. For example, the coding
$1114111$ corresponds to the following ordering of necklaces of length $5$:
\begin{equation*}00000,00001,00011,00111,00101,01011,01111,11111.\end{equation*}

\begin{table}[p]
\scriptsize
\begin{tabular}{|l|p{10.6cm}|}
\hline 
$n$ & Gray code\\
\hline
\hline 
$5$ & \texttt{1114111}\\%\input{c5.tex} \\
\hline 
$7$ & \texttt{1116165614521341111}\\%\input{c7.tex} \\
\hline 
$9$ & \texttt{11181878167876781576876567861878678185951575415813754113211}\\%\input{c9.tex} \\
\hline 
$11$ & {\texttt{111a1a9a189a989a179a989a9798a9891679a89a978a98a6789a89789a871579a978a98a67a89a8a78
98b6167a9a879697a89a97a5856896378a2789a87269a78a989678798489278979a62a8a7292474525
4527118584a82a346181811}
} \\
\hline 
$13$ & {\texttt{111c1cbc1abcbabc19bcbabcb9bacbab189cbcabcb9abacba89bcbabcb9bcaba1789abcab9acbcac98
abcbabc78abcab9acbcac98abcbab97acbcac7abcba16789cbc9abcbab89cabcac9abcba789acbac9a
cbc89cbacbc89b65189abcb9abcac89cabcac9879acabc8abcab96acba6915677ca1579bacba9a8cbc
ab1ca78b9cabcb8797b8cba89cbacb9b768c9bc9acb89bcabcb9856ab8abcb9acbac89cbca97c7ba75
68cb89acabc9789cba9babc98acabda7acb188acabc78cbc8ca967cb7ab76a8bacba8a57c5a8bacba6
14ccb2bc89aca9aba789bc9acb8b67963acab567bc9acbab789b9abca98ca9ba678ba89abc7ac859c9
7897ca98ba4a9ca2c67cb98747a158ca9bc89ca7a89b8d6167bc74a674ca47c45956c89aca6a42c11b
18bc92c8ba86ca657b52865628ba392952178a712ca139c78197a4411}
} \\
\hline 
$15$ & {\texttt{111e1ede1cdedcde1bdedcdedbdcedcd1abedecdedbcdcedcabdedcdedbdecdc19abcedecedbcedece
abcdedcde9acedecedbcedeced17dedcd9bcedecebcdedc189abcdecdbcedecebacdedcde9acedeced
bcedeced89bcdecdbcedecebacdedcdebc8edeced9bcedecebcdedc89cdc9abcedecedacedeced16ed
ece178abedebcdedcdabcedeced9acdecdbcedecebacdedcdbc9edece9cdca89bcdecdbcedecebacde
dcde9acdecdbcedecebacdedcdb9cedece9cded8c8789abdedcbdedabdedcded9acedceb9edacdece9
acedecbacdedbdedcf9cded18eb89abedcecbcdedcd9bcdecdbcdecba9cdedcde167bdcedcbdecede9
edecd1679edbecdedbdedca8bde3cdedbdcedcd9becdecea8cdcedcbdcedca8dedced89bcdecdbce9b
decded9bec9dcda789bedbcedecabcdcedcbded9badcedcbdece9ae9bcbedbcdbd98abdedcbdedabed
cedebadeaced9f7c9edece179aedecdecded8bab8cdcedcbadcedc678becdedbdcedcda9becdecebcd
edc9bcedce89aedcbdedcbacedcdbadeacedec9abcecdebcecda67d96f819bcdece8bcded8a789dedc
bacdcedcbdedcdaedad9abedcecbedceaebdecdedbcec86dce67ec79abedebcdedcd9bedcecbedca5d
ec19bcdcedcbded971579adedbcedceabecdecbcedcda9bcecdebcdecdabecdecbcedcd789acdcedca
bedcede9acecdebcdecda9ecd9bdcedcbc98abedbcdeabedcedeba9867bceedcecdedbcedce9aecd9b
dcedcbcde8c86789decb9adebcdecda9ed9bcdce89eadebcedceaecbdeceba9bcbdcbedbca8b8edcde
b879adedcabdcedcb9aebcdedbdeca9ce9becdece9b9c789ec7abebcdceabecdecebcdc97875f467ab
ebcdceabeced2edeb9decded15bcbdcbedbc9a9bedcedeb7569bcec9adedcbdcedc9aedc98cdecd8be
dcebed789dbecdedbadaeda9bdedcbdceabedcecbc98aedcedebacecdebcdc98a8768bcedccdedbc86
7edcbedbdabedcecb98acedceabedabacbec9adedaceb8edcd79a9bdedbdecda68adcea968bedcecbc
edc87c9adecd5a7a59ecdeceabcecde9f46be68ecded8adedbcedc7abebd67bdeb81689de71edaedcb
e1ebeabcecdbcdc89acdcabdcecbacaed9adecbedbd78bdbcdecba9ecbdcb9dceb9ce9cdc87bdcecbc
e79aecedabdcebce968dc1ebedbcdeabecd5679dce2bcedcd98cd38569bedbcde9aedcbdebea98acab
cdba97bdebcedadbcebc79ca9bed6dabdcbd89dacdabdc9caeb8ab89ba7ecbcdc96bdcbd8965958ade
cbebd87cdea879c8b6986ecbc7897bcbe745eddcbddeab9aebcec8de6ed4c92ecbedbdaeb676459dab
dc9ca78bda9dbab45dcbdd9bcb635f4de51676b2dab76ba6997ec89a9ce7ec9ea7978bc8ce45e6cbce
6746bec1e78aecbacbd9b74319db8de9a8cec9acecb986511398b7959edbda796c8cd8a867a8dc1aec
eb1d425975ae7ec425527e11d1dbadcac1642aca8acd8abd89bd96e729a8595dbeb7265bd18a71dcdc
3c4562b8676d7b74169b3115278631cca414f52ed1db62ad416187cb111}
} \\
\hline
\end{tabular}
\caption{Gray code for binary necklaces of odd length $n\leq 15$.}
\label{tab:GrayCodeOdd}
\end{table}

On the other hand, binary necklace graphs are bipartite. When $n\geq 4$ is even,
the difference of the partitions is greater than 1 so the graph cannot contain
a Hamiltonian path. In fact, it is not hard to calculate an upper bound:

\begin{proposition}
For the binary alphabet and $n$ even, the number of vertices in a longest path in $NG(n)$
is at most
\begin{equation}\label{eq:longestEvenPathLength}
BPL(n) = \frac{1}{n}\sum_{\substack{d\mid n\\ 2\nmid d}}\varphi(d)2^{n/d}+1.
\end{equation}
\end{proposition}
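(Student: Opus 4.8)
The plan is to combine the bipartite structure of $NG(n)$ with an exact count of its two colour classes. For a necklace $\nu$ let $w(\nu)$ denote its \emph{weight}, i.e. the number of occurrences of $1$ in any representative; this is well defined and constant on conjugacy classes. First I would establish that $NG(n)$ is bipartite, the two parts $A$ and $B$ being the necklaces of even and of odd weight. Since the necklace cycles cover every vertex of $dB(n)$, the vertices of $NG(n)$ are exactly the $N_2(n)$ necklaces, and any edge of $NG(n)$ between distinct necklaces $X\neq Y$ arises from an edge $(au,ub)$ of $dB(n)$ with $au\in X$, $ub\in Y$, $a,b\in\Sigma$. If $a=b$, then $ub$ is a conjugate of $au$, forcing $X=Y$; hence for a genuine edge of the quotient we have $a\neq b$, so $w(X)$ and $w(Y)$ differ by exactly one and thus lie in different parts. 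Every edge of $NG(n)$ therefore joins $A$ to $B$.

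Next I would invoke the elementary fact that a path $v_0v_1\cdots v_\ell$ in a bipartite graph alternates between the two parts, so the numbers of its vertices in the two parts differ by at most one; if the smaller part has $a$ vertices, the path uses at most $a$ of them and at most $a+1$ from the other part, hence has at most $2a+1$ vertices. Consequently a longest path in $NG(n)$ contains at most $2\min(|A|,|B|)+1$ vertices, and it remains to compute $\min(|A|,|B|)$ and to verify that it equals $\tfrac12\bigl(BPL(n)-1\bigr)$.

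The core computation is the signed count $|A|-|B|=\sum_{\nu}(-1)^{w(\nu)}$, which I would evaluate by a weighted Burnside argument for the cyclic group $C_n$ acting on $\Sigma^n$, with $\Sigma$ binary. Letting $\rho$ be the rotation by one position and noting that $(-1)^{w}$ is invariant under conjugacy, the weighted form of Burnside's lemma gives $|A|-|B|=\tfrac1n\sum_{j=0}^{n-1}\sum_{x\in(\Sigma^n)^{\rho^j}}(-1)^{w(x)}$. A word fixed by $\rho^j$ is the $n/d$-fold repetition of a block of length $d=\gcd(n,j)$, so summing $(-1)^{w(x)}$ over these words yields $2^d$ when $n/d$ is even (each summand is $1$) and $(1-1)^d=0$ when $n/d$ is odd. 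Grouping the indices $j$ by the value $d=\gcd(n,j)$ (there are $\varphi(n/d)$ of them) leaves only the even cofactors and gives $|A|-|B|=\tfrac1n\sum_{e\mid n,\,2\mid e}\varphi(e)2^{n/e}>0$, so $B$ is the smaller class. Combining this with $|A|+|B|=N_2(n)$ from \eqref{eq:numberOfNecklaces} yields $2|B|=N_2(n)-(|A|-|B|)=\tfrac1n\sum_{e\mid n,\,2\nmid e}\varphi(e)2^{n/e}$, whence $2\min(|A|,|B|)+1=BPL(n)$, as claimed. I expect the main obstacle to be carrying out this sign‑weighted Burnside count carefully, in particular identifying the fixed‑point sets of the rotations and confirming that only the odd‑divisor terms survive in the final expression; the bipartiteness and the bipartite path bound are routine by comparison.
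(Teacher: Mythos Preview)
Your argument is correct and shares the paper's overall strategy: establish that $NG(n)$ is bipartite with respect to weight parity, bound a longest path by $2\min(|A|,|B|)+1$, and then compute $|B|$. The difference lies entirely in the counting step. The paper computes $|A|$ directly by summing the known formula $N(n,l)=\tfrac1n\sum_{d\mid(l,n)}\varphi(d)\binom{n/d}{l/d}$ over even $l$ and then reorganizing the double sum according to whether the divisor $d$ is even or odd, eventually collapsing the binomial sums to powers of $2$. Your signed Burnside computation of $|A|-|B|$ is a cleaner alternative: it bypasses the binomial manipulations entirely, and the vanishing of the odd-cofactor terms falls out immediately from $(1-1)^d=0$. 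Both arrive at $2|B|=\tfrac1n\sum_{d\mid n,\,2\nmid d}\varphi(d)2^{n/d}$, so the conclusions agree; your route is shorter, while the paper's is perhaps more elementary in that it quotes only the standard necklace-count formula rather than a weighted orbit-counting identity. Your explicit justification of bipartiteness (via $a\neq b$ forcing a weight change of one) is also a small addition over the paper, which asserts bipartiteness without comment.
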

The first few terms of $(BPL(2n))_{n=1}^{\infty}$ are
$$3,\ 5,\ 13,\ 33,\  105,\  345,\  1173,\  4097,\  14573,\  52433,\  190653,\  699073,\ldots.$$
The sequence $(BPL(n))_{n=1}^{\infty}$ equals $(a(n)+1)_{n=1}^{\infty}$ where $(a(n))_{n=1}^{\infty}$
is sequence A063776 in Sloane's encyclopedia of integer sequences.
\begin{proof}
For ease of notation, we shall denote by $(i,j)$ the \emph{greatest common divisor of $i$ and $j$}.
Let $A$ be the set of necklaces containing an even number of $1$'s and $B$ the set of necklaces
containing an odd number of $1$'s; $NG(n)$ is then bipartite with respect to the partition into $A$
and $B$. We have that the number $N(n,l)$ of necklaces of length $n$ containing precisely $l$ $1$s
equals $\tfrac{1}{n}\sum_{d\mid (l,n)}\varphi(d)\binom{n/d}{l/d}$
(see, e.g., \cite{SRu99}). We thus have
	\begin{align*}
	 n|A|	&= \sum_{l=0}^{n/2}nN(n,2l) = \sum_{l=0}^{n/2} \sum_{d\mid (2l,n)}\varphi(d)\binom{n/d}{2l/d}.
	\end{align*}
	We shall count the above in a different order. Let us first consider a fixed divisor $d$
	of $n$ such that $2\mid d$. In the above sum, we count $\varphi(d)\binom{n/d}{2l/d}$ for
	each $0\leq 2l\leq n$ such that $d\mid 2l$, that is, $2l = dl'$ for some $l'$ (since
	$2\mid d$). We thus count $\varphi(d)\binom{n/d}{l'}$ for each $0\leq l' \leq \tfrac{n}{d}$.
	
	Consider then a fixed divisor $d$ of $n$ such that $2\nmid d$. Similar to the above, we count
	$\varphi(d)\binom{n/d}{2l/d}$ for each $0\leq 2l \leq n$ such that $d\mid 2l$, that is,
	$2l = 2dl'$ for some $l'$ (since $2\nmid d$). We thus count $\varphi(d)\binom{n/d}{2l'}$ for
	each $0\leq l' \leq \tfrac{1}{2}\tfrac{n}{d}$.
	
	Combining the above calculations we obtain
	\begin{align*}
		n|A|&= \sum_{\substack{d \mid n\\ 2\mid d}}\varphi(d) \sum_{l'=0}^{n/d}\binom{n/d}{l'}
						+ \sum_{\substack{d \mid n\\ 2\nmid d}} \varphi(d) \sum_{l'=0}^{\tfrac{1}{2}n/d}\binom{n/d}{2l'}\\
				&= \sum_{\substack{d\mid n\\ 2\mid d}} \varphi(d)2^{n/d}
						+ \sum_{\substack{d\mid n\\ 2\nmid d}}\varphi(d)2^{n/d-1}
					= \tfrac{n}{2} N_2(n) + \tfrac{1}{2}\sum_{\substack{d\mid n\\ 2\mid d}}\varphi(d)2^{n/d},
	\end{align*}
	so that $|B| = \tfrac{1}{2} N_2(n) -\tfrac{1}{2n}\sum_{\substack{d\mid n\\ 2\mid d}}\varphi(d)2^{n/d}
	=\tfrac{1}{2n}\sum_{\substack{d\mid n\\2\nmid d}}\varphi(d)2^{n/d}<|A|$.
	A longest path in $NG(n)$ can thus contain at most $|B|+1$ vertices from $A$ and $|B|$ vertices
	from $B$, that is, $2|B|+1 =\tfrac{1}{n}\sum_{\substack{d\mid n\\ 2\nmid d}}\varphi(d)2^{n/d} + 1 = BPL(n)$
	vertices in total.
\end{proof}

We conjecture that this bound is actually achievable:
\begin{conjecture}\label{con:evenLongestPath}
For even $n$, the length of a longest path in the (bipartite)
binary necklace graph $NG(n)$ is equal to $BPL(n)$ (defined by
\eqref{eq:longestEvenPathLength}).
\end{conjecture}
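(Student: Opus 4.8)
The plan is as follows. The upper bound, namely that a longest path in $NG(n)$ contains at most $BPL(n)$ vertices, is already established above. It remains to prove the matching lower bound, i.e.\ to \emph{construct} a path attaining it. Recall the bipartition of $NG(n)$ into the set $A$ of necklaces of even weight and the set $B$ of necklaces of odd weight, where the computation in the upper bound proof gives $|A|-|B| = \tfrac{1}{n}\sum_{d\mid n,\,2\mid d}\varphi(d)2^{n/d}\geq 1$ and $BPL(n)=2|B|+1$. Since every path in a bipartite graph alternates sides, attaining $BPL(n)$ is equivalent to finding a subset $A'\subseteq A$ with $|A'|=|B|+1$ such that the induced subgraph $NG(n)[A'\cup B]$ admits a Hamiltonian path; equivalently, I want an alternating sequence $a_0,b_1,a_1,b_2,\ldots,b_{|B|},a_{|B|}$ of distinct necklaces visiting \emph{every} odd-weight necklace exactly once. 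Thus, in contrast with the Hamiltonicity demanded in \autoref{con:GrayCode}, here one is permitted to omit the $|A|-|B|-1$ ``excess'' even-weight necklaces, which provides crucial extra freedom.

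My first approach would be constructive and organized by weight. Each edge of $NG(n)$ changes the weight by exactly $\pm 1$, so the path threads snakewise through the weight classes $0,1,\ldots,n$, the odd classes forming $B$. I would attempt a \emph{genlex}/monotone construction in the spirit of the known necklace Gray-code machinery (\cite{DeDr07,SRu99,WVa06,ZKC08}): for each interface between adjacent weight classes, produce a local alternating path covering all odd-weight necklaces of that class while using a minimal set of even-weight necklaces as ``connectors,'' and then stitch these local paths together at shared even-weight necklaces. The counts $N(n,l)=\tfrac{1}{n}\sum_{d\mid (l,n)}\varphi(d)\binom{n/d}{l/d}$ already appearing in the upper bound proof would be used to certify that enough connectors are available at each interface, so that a Hall-type condition for matching each odd-weight class into the adjacent even-weight classes holds.

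As a complementary route, I would phrase the existence of a $B$-saturating path as a combined \emph{matching-plus-connectivity} problem: such a path exists if and only if one can choose, for every odd-weight necklace $b$, an unordered pair of distinct even-weight neighbours $\{a,a'\}$ so that these $|B|$ chosen pairs, viewed as edges on the vertex set $A$, assemble into a single simple path. The matching part, that the required neighbours exist compatibly, is governed by Hall-type expansion estimates which I expect to follow from the same necklace-counting identities; the delicate part is the \emph{global connectivity}, i.e.\ arranging the chosen pairs into one path rather than a disjoint union of paths and cycles, for which I would invoke a Veblen/Euler-style splicing argument of the kind used in \autoref{prop:maxVEmpty} to merge components.

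The main obstacle, I expect, is precisely this global stitching: guaranteeing a \emph{single} path rather than many fragments, while simultaneously covering all of $B$ and respecting the bipartite imbalance. This is the same structural difficulty that keeps \autoref{con:GrayCode} open in the odd case, and the two problems are closely linked, since the verified odd Gray codes of \autoref{tab:GrayCodeOdd} could plausibly serve as the backbone of an even construction. For this reason I anticipate that a complete proof of \autoref{con:evenLongestPath} will require either a genuinely new stitching lemma for necklace graphs across weight levels, or a reduction showing that \autoref{con:evenLongestPath} follows from \autoref{con:GrayCode}; establishing such a reduction, and thereby transferring the computational verification up to $n=15$, would be the most promising concrete milestone.
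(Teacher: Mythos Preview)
The statement you were asked to prove is a \emph{conjecture} in the paper, not a theorem: the paper does not prove it. The authors only establish the upper bound $BPL(n)$ in the preceding proposition and then state the matching lower bound as \autoref{con:evenLongestPath}, supported solely by computational verification for $n\leq 8$ (see \autoref{tab:codeMaximalEven}). There is therefore no ``paper's own proof'' to compare your attempt against.

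Your proposal is not a proof either, and you seem to be aware of this: it is a research plan that correctly isolates the essential difficulty (the global stitching of local alternating fragments into a single path) and correctly identifies the kinship with \autoref{con:GrayCode}, which is itself open. The reformulations you give---the weight-layered snake construction, and the matching-plus-connectivity viewpoint on $A$---are reasonable angles of attack, but neither is carried through, and you explicitly flag the connectivity step as the obstacle you cannot presently overcome. That assessment is accurate; this is precisely why the statement remains a conjecture. One minor correction: the paper's computational evidence for the even case reaches only $n\leq 8$, not $n\leq 15$; the $n\leq 15$ verification pertains to the odd-$n$ Gray-code conjecture, so a reduction from \autoref{con:evenLongestPath} to \autoref{con:GrayCode} would not automatically transfer verification up to $15$ in the direction you suggest.
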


We have computationally verified the conjecture to be true for $n\leq 8$. See
\autoref{fig:necklacegraphs} for $n=4$ and \autoref{tab:codeMaximalEven} for
$n=6,8$, where the coding is defined as that of the Gray codes in
\autoref{tab:GrayCodeOdd}.

\begin{table}
%\scriptsize
\centering
\begin{tabular}{|l|l|}
\hline 
$n$ & code\\
\hline
\hline 
$6$ & \texttt{111521651511}\\%\input{c6.tex} \\
\hline 
$8$ & \texttt{11171767156725671472674521615611}\\%\input{c8.tex} \\
\hline 
\end{tabular}
\caption{Path in the binary necklace graph $NG(n)$ of length $BPL(n)$.}
\label{tab:codeMaximalEven}
\end{table}

\subsection{On other maximal cycle decompositions of the de Bruijn graph}
We shall now turn to maximal cycle decompositions not induced by necklaces. In other words, the
length of a cycle in such a decomposition need not divide the order of the de Bruijn graph. We
give some examples of such decompositions which induce quotient graphs containing Hamiltonian paths.

\begin{example} The $5$-abelian singleton
\begin{equation*}
u = 0^{i_1}[000]^{-1}(00011)^{i_2+3/5}[001]^{-1}(001)^{i_3+1/3}(01)^{i_4}[101]^{-1}(0111)^{i_5}[111]^{-1}1^{i_6}
\end{equation*}
corresponds to a cycle decomposition (with $V_{\otimes}$ empty) of $dB(4)$, the resulting
quotient graph containing a Hamiltonian path. Moreover, the cycle decomposition contains
$N_{2}(4)=6$ cycles, which is maximal possible by \autoref{thm:MykkeltveitLempel}. Note here
that the second and third cycles have lengths which do not divide $4$.%\xqed
\end{example}

Similarly, we obtain the following $N_2(6) = 14$ vertex-disjoint cycles in $dB(6)$ which
induce a quotient graph containing a Hamiltonian path:
\begin{equation*}
0, 0^5101, 0^31, 0^411,  0^31^3,  001^4, 001011,
001,001101,011,01,0101^3,01^5,1.
\end{equation*}
(The order the cycles are listed in gives such a path.) Note that the second and third
cycle have lengths which do not divide $6$.

For $n=8$ we computed the following set of $N_2(8) = 36$ vertex-disjoint cycles of $dB(8)$, the cycles
listed in an order yielding a Hamiltonian path in the quotient graph:
\begin{align*}
    & 0, 0^71, 0^611, 0^51^3, 0^5101, 0^41101, 0^41001, 0^41011,
    0^41^4,      0^31^5, 0^31^301, 0^311001, \\
    & 0^31, 0^310101, 01,  010101^5, 00110101, 0^310011,0^311011, 011 , 0101101^4011, \\
    & 00101101, 00100101, 001^3001, 001^401, 0^21^6, 001101^3, 0011, 001^3011, 00101011, \\& 00101^4, 0^3101^3, 0101^3,
    01^3, 01^7, 1.
\end{align*}

These observations lead us to state the following conjecture, which is now verified for
all odd $n\leq 15$ and all even $n\leq 8$ over the binary alphabet.
\begin{conjecture}\label{conj:SingularClasses}
For every $n\in\N$ and alphabet $\Sigma$, there exists a maximal cycle decomposition
of $dB_{\Sigma}(n)$ so that the quotient graph contains a Hamiltonian path.
\end{conjecture}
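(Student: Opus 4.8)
The plan is to read the conjecture as an existence statement and to split it according to whether the necklace decomposition already does the job. Recall that the cycles induced by necklaces partition $\Sigma^n$ into exactly $N_{|\Sigma|}(n)$ vertex-disjoint cycles, which is the maximum possible by \autoref{thm:MykkeltveitLempel}; moreover, by \autoref{prop:maxVEmpty} any maximal decomposition leaves $V_{\otimes}$ empty, and (as noted earlier, the quotient graph of a maximal decomposition may be regarded as undirected). The whole problem therefore reduces to exhibiting, for each $n$ and $\Sigma$, \emph{some} maximal cycle decomposition whose undirected quotient graph admits a Hamiltonian path.

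First I would dispose of the cases in which the necklace decomposition itself suffices, namely those $(n,\Sigma)$ for which $NG_{\Sigma}(n)$ is Hamiltonian-path-connected. For the binary alphabet with $n$ odd this is precisely \autoref{con:GrayCode}, so in that regime the conjecture reduces to the Gray code conjecture; here one takes $C = C_{\Sigma}(n)$ and is done. Thus a natural first step is to establish that $NG_{\Sigma}(n)$ has a Hamiltonian path for as large a class of $(n,\Sigma)$ as possible, quoting or extending the known Gray code results for necklaces.

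The substantive case is when $NG_{\Sigma}(n)$ has \emph{no} Hamiltonian path, most conspicuously for binary even $n$, where the bipartite imbalance computed in \eqref{eq:longestEvenPathLength} forbids one. Here the necklace cycles must be replaced by cycles whose lengths need not divide $n$, so as to cross the parity classes and restore connectivity while keeping the cycle count pinned at $N_{|\Sigma|}(n)$. I would introduce a local \emph{re-routing} operation: take the union of the edges of a small collection of necklace cycles and repartition that edge set into the same number of vertex-disjoint cycles, now with vertex sets straddling the two sides of the bipartition. The explicit decompositions exhibited for $n=4,6,8$ suggest that a bounded number of such re-routings, localised near $0^n$ and $1^n$, already rebalances the quotient; the goal would be to turn these ad hoc constructions into a uniform recipe and then build the Hamiltonian path incrementally along the re-routed cycles. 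An alternative, more structural route is an induction on $n$ exploiting the identification of the vertices of $dB_{\Sigma}(n)$ with the edges of $dB_{\Sigma}(n-1)$ (the same correspondence used in \autoref{prop:maxVEmpty}): a maximal vertex-disjoint decomposition of $dB_{\Sigma}(n)$ is an edge-disjoint cycle cover of $dB_{\Sigma}(n-1)$, and one would try to lift a decomposition of $dB_{\Sigma}(n-1)$ whose quotient is Hamiltonian to one of $dB_{\Sigma}(n)$ with the same property.

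The hard part is exactly the global constraint that makes this a genuine open problem. Keeping the number of cycles at its maximal value $N_{|\Sigma|}(n)$ while simultaneously forcing the quotient to be connected enough to carry a Hamiltonian path is a delicate balance, and there is no obvious monotone invariant that a re-routing or a lift improves; in particular, Hamiltonicity of the quotient does not transfer cleanly under the $dB_{\Sigma}(n-1)\to dB_{\Sigma}(n)$ correspondence. Worse, even the cleanest subcase — binary $n$ odd via the necklace decomposition — is the twenty-year-old \autoref{con:GrayCode}, so any complete proof must either resolve that conjecture or devise a construction of maximal decompositions that sidesteps necklace cycles altogether. For this reason I would expect partial progress (uniform constructions for structured families of $n$, together with computer-assisted verification for small $n$ as already carried out) well before a full proof.
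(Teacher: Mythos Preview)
The statement you were asked to prove is \emph{Conjecture}~\ref{conj:SingularClasses}; the paper does not prove it. The authors state it as an open problem, give the equivalent reformulation (\autoref{con:singletonNumberSharp}), and support it only by computational evidence: the necklace Gray codes of \autoref{tab:GrayCodeOdd} for odd $n\le 15$, and the ad hoc non-necklace maximal decompositions exhibited for $n=4,6,8$. There is therefore no ``paper's own proof'' to compare your attempt against.

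Your write-up is not a proof either, and you say so yourself. What you have produced is a reasonable survey of possible attack lines --- reduce to \autoref{con:GrayCode} when the necklace quotient is already Hamiltonian, and otherwise try local re-routings or an inductive lift via the $dB(n)\leftrightarrow dB(n-1)$ edge/vertex correspondence --- together with an honest assessment of why each line is hard. That assessment is accurate: the binary odd subcase alone is the twenty-year-old Gray code conjecture, and you correctly note that Hamiltonicity of the quotient has no obvious monotone behaviour under re-routing or lifting. So there is no error to flag, but also no proof; your proposal matches the paper's own stance that the problem is open and that the available evidence is computational.
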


An equivalent formulation, due to \autoref{prop:smaxcycles}, is:

\begin{conjecture}\label{con:singletonNumberSharp}
   For any $k,m\geq 1$, the number of $k$-abelian singleton classes of length $n$ over an $m$-ary alphabet is of order $\Theta(n^{N_m(k-1)-1})$.
\end{conjecture}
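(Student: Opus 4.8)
The upper bound is already provided by \autoref{th:singletons}, so the entire content of \autoref{con:singletonNumberSharp} lies in the matching lower bound $\Omega(n^{N_m(k-1)-1})$. My plan is to route everything through the combinatorial reformulation \autoref{conj:SingularClasses}. Indeed, by \autoref{prop:smax} the number of singletons of length $n$ is $\Theta(n^{s_{\max}-1})$, while \autoref{prop:smaxcycles} and \autoref{thm:MykkeltveitLempel} give $s_{\max}\leq N_m(k-1)$, with equality precisely when $dB_{\Sigma}(k-1)$ carries a \emph{maximal} cycle decomposition (all $N_m(k-1)$ cycles present, so that $V_{\otimes}=\emptyset$ by \autoref{prop:maxVEmpty}) whose quotient graph admits a Hamiltonian path. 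The first step is thus bookkeeping: I would record as a lemma that \autoref{con:singletonNumberSharp} is equivalent to the existence, for every alphabet $\Sigma$ and every order $n=k-1$, of a Hamiltonian-traceable maximal cycle decomposition of $dB_{\Sigma}(n)$.

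The second step handles the case in which the necklace decomposition itself suffices. Taking $C=C_{\Sigma}(n)$ makes the quotient graph equal to the necklace graph $NG_{\Sigma}(n)$, and a Hamiltonian path in it is exactly a Gray code for necklaces. For the binary alphabet with $n$ odd this is \autoref{con:GrayCode}, which I would invoke --- unconditionally for $n\leq 15$ via the codes recorded in \autoref{tab:GrayCodeOdd}, and conditionally in general --- thereby settling the lower bound for binary $k$ even. I would then try to extend the known Gray-code constructions to $m$-ary necklaces of arbitrary order, which would remove the alphabet restriction and leave only the even binary orders open.

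The third, and genuinely difficult, step is binary $n$ even (and, uniformly, all remaining $\Sigma,n$). Here the necklace graph is bipartite with strictly unequal sides --- its longest path has only $BPL(n)$ vertices, fewer than $N_2(n)$ --- because any quotient edge between two distinct necklaces must change the first letter to a different last letter and hence flip the parity of the number of $1$'s. Necklaces alone therefore give only the partial bound $\Theta(n^{BPL(k-1)-1})$, and one must introduce non-necklace cycles, i.e.\ cycles whose length fails to divide $n$ and which straddle the parity boundary, exactly as in the worked decompositions for $n=6,8$. My plan is to keep most necklace cycles intact and to \emph{rewire} a bounded family of them into parity-crossing cycles: viewing $dB(n)=L(dB(n-1))$, a maximal decomposition of $dB(n)$ is an edge-disjoint cycle decomposition of the Eulerian graph $dB(n-1)$ into exactly $N_m(n)$ cycles, and I would attempt to build such a decomposition inductively in $n$, using the divisor structure of $N_m(n)$ and Veblen's theorem (the re-decomposition tool already used in \autoref{prop:maxVEmpty}) to lift a traceable decomposition from a lower order while repairing the parity defect.

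The crux, and where I expect the real obstruction, is enforcing maximality and traceability at once. Maximality is a rigid global optimum (exactly $N_m(n)$ cycles, the Mykkeltveit bound), and favouring many short cycles tends to fragment the quotient graph, whereas traceability demands it be connected and, in the even case, parity-balanced; Veblen's theorem produces some edge-disjoint cycle decomposition but gives no control over the number of cycles, so keeping the count pinned at $N_m(n)$ during the rewiring is itself delicate. Since there is no analogue of the Gray-code machinery for non-necklace decompositions, this rewiring would have to be governed by a new invariant or an explicit uniform construction rather than the ad hoc small cases, and producing such a construction valid for all $n$ is the step I do not expect to be routine.
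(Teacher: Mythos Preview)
There is nothing to compare against: the statement is explicitly a \emph{conjecture} in the paper, not a theorem, and the paper offers no proof. What the paper does is precisely your first step --- it records that \autoref{con:singletonNumberSharp} is equivalent to \autoref{conj:SingularClasses} (existence, for every $\Sigma$ and every order, of a maximal cycle decomposition of $dB_{\Sigma}(n)$ whose quotient graph is Hamiltonian-traceable) via \autoref{prop:smax}, \autoref{prop:smaxcycles}, \autoref{thm:MykkeltveitLempel}, and \autoref{prop:maxVEmpty} --- and then supplies numerical evidence: the Gray codes in \autoref{tab:GrayCodeOdd} for binary odd $n\leq 15$, and the handcrafted non-necklace decompositions for binary even $n\in\{4,6,8\}$. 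Beyond that equivalence and those small cases the paper makes no further claim.

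Your steps 2 and 3 therefore go beyond anything in the paper; they amount to a research program, and you are candid about this. Two remarks on that program. First, invoking \autoref{con:GrayCode} for binary odd $n$ does not help: that conjecture is itself open (the paper only verifies it through $n=15$), so you would be reducing one open problem to another of comparable difficulty. Second, your rewiring idea for even $n$ correctly identifies the obstruction (the necklace quotient is bipartite with unequal sides) and mirrors the paper's ad hoc examples, but --- as you yourself flag --- Veblen's theorem and the line-graph viewpoint give no control on the cycle count, and holding it at the Mykkeltveit bound $N_m(n)$ while simultaneously enforcing traceability of the quotient is exactly the unsolved core. In short, your proposal matches the paper's framing, correctly locates the difficulty, and, like the paper, stops short of a proof.
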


\section{Conclusions}\label{sec:conclusions}
In this paper we were interested in cardinalities %and structures
of $k$-abelian equivalence classes. We were also interested in the
structure of singleton classes. By showing a new equivalent
definition of $k$-abelian equivalence based on rewriting, we obtained a
partial description of the structure of $k$-abelian singletons. Further,
using cycle decompositions of de Bruijn graph, we provided an upper
bound for the number of singleton classes (\autoref{th:singletons}).
We conjecture that this bound is asymptotically sharp and propose
two related conjectures concerning necklace (de Bruijn) graphs
(Conjectures \ref{con:singletonNumberSharp}, \ref{con:GrayCode},
\ref{con:evenLongestPath}). To conclude, we suggest the following
open problem:
\begin{openproblem}
For which functions $f:\N \to \N$ there exists a sequence of
words $(w^{(n)})_{n=1}^{\infty}$, $|w^{(n)}| = n$, such that
$|[w^{(n)}]_k| = \Theta(f(n))$?
\end{openproblem}
In \autoref{claim:example} we obtain a positive answer for any
polynomial $f$ and for any $f$ satisfying $f(n) \leq n-2$ for
all $n\in\N$.

The formula obtained in \autoref{prop:classSize} gives some
hints of what such functions $f$ can be, but to analyze the
asymptotic cardinality seems to be difficult. Even analyzing the
sequence $f_k(n) = \max\{|[w]_k| \mid |w| = n\}$ is nontrivial.

\section*{Acknowledgments}
The first and fourth authors are supported by the Academy of Finland, grants 257857 and 137991.
The second author is supported by the LABEX MILYON (ANR-10-LABX-0070) of Universit\'e de Lyon,
within the program "Investissements d'Avenir" (ANR-11-IDEX-0007) operated by the
French National Research Agency (ANR).
We would like to thank the anonymous reviewers for valuable comments which significantly
improved the presentation.

\section*{\refname}
\bibliography{bibliography}{}
\bibliographystyle{abbrv}

\end{document}